\pgfplotsset{compat=1.13}
\newtheorem{theorem}{Theorem}
\newtheorem{proposition}[theorem]{Proposition}
\newtheorem{lemma}[theorem]{Lemma}
\newtheorem{definition}{Definition}
\begin{document}
  

\title{A Structure-Preserving Divide-and-Conquer Method for Pseudosymmetric Matrices}
  
\author[$\ast$]{Peter Benner}
\affil[$\ast$]{Max Planck Institute for Dynamics of Complex Technical Systems, \authorcr 
    Sandtorstr. 1, 39106 Magdeburg, Germany.\authorcr
  \email{peter.benner@mpi-magdeburg.mpg.de}, \orcid{0000-0003-3362-4103}}
\author[$\dagger$]{Yuji Nakatsukasa}
\affil[$\dagger$]{Mathematical Institute, University of Oxford, \authorcr
    Oxford, OX2 6GG, UK.
        \authorcr
  \email{nakatsukasa@maths.ox.ac.uk}, \orcid{0000-0001-7911-1501}}
  
\author[$\ddag$]{Carolin Penke}
\affil[$\ddag$]{Max Planck Institute for Dynamics of Complex Technical Systems, \authorcr 
    Sandtorstr. 1, 39106 Magdeburg, Germany.    \authorcr
  \email{penke@mpi-magdeburg.mpg.de}, \orcid{0000-0002-4043-3885}}
  
\shortauthor{P. Benner, Y. Nakatsukasa, C.Penke}
  
\keywords{Matrix Sign Function, Polar Decomposition, Eigenvalue Problem, Structure Preservation, Divide-and-Conquer, Pseudosymmetry}

\msc{15A18, 65F15}
  
\abstract{
 We devise a spectral divide-and-conquer scheme for matrices that are self-adjoint with respect to a given indefinite scalar product (i.e.\ \emph{pseudosymmetic} matrices). The pseudosymmetric structure of the matrix is preserved in the spectral division, such that 
the method can be applied recursively to achieve full diagonalization. The method is well-suited for structured matrices  that come up in computational quantum physics and chemistry. In this application context, additional definiteness properties guarantee a convergence of the matrix sign function iteration within two steps when Zolotarev functions are used. The steps are easily parallelizable. Furthermore, it is shown that the matrix decouples into symmetric definite eigenvalue problems after just one step of spectral division.
}

\novelty{The spectral divide-and-conquer methodology is extended such that the  structure of pseudosymmetric matrices is preserved. Further results are given regarding the computation of the matrix sign function and subspace bases, in particular when a certain definiteness property holds.}
\maketitle



\section{Introduction}
\label{Sec:Introduction}

Given a diagonalizable matrix $A\in\mathbb{K}^{n\times n}$, where $\mathbb{K}\in\{\mathbb{R},\mathbb{C}\}$, we are interested in full diagonalization, i.e.\ finding $V\in\mathbb{K}^{n\times n}$, such that
\begin{align}\label{Eq:Diagonalization}
 V^{-1} A V = D.
\end{align}
For $\mathbb{K}=\mathbb{C}$, the matrix $D$ is diagonal and contains the eigenvalues of $A$ as diagonal values. For $\mathbb{K}=\mathbb{R}$, $D$ is block diagonal with blocks of size $1\times 1$, corresponding to real eigenvalues, or $2\times 2$, corresponding to a pair of complex conjugate eigenvalues. The well-established standard approach for solving \eqref{Eq:Diagonalization} starts by computing the Schur decomposition of  $A$
\begin{align*}
 Q^{\tran} A Q = T,
\end{align*}
where $Q$ is orthogonal (or unitary) and $T$ is (block) upper triangular, via the QR algorithm \cite{GolV13}. The eigenvectors of $T$ are computed via backward substitution or the eigenvectors of $A$ are recovered via inverse iteration \cite{Lapack}. The QR algorithm, however, has proven difficult to parallelize and is not well-suited for computing only parts of the eigenvalue spectrum \cite{BaiD93}. This is why spectral divide-and-conquer algorithms were explored as an alternative \cite{Mal93,BaiD93,BaiD94,BaiDG97}. They are based on the idea of spectral division. A matrix $V$ is found such that
\begin{align}\label{Eq:SpecDiv}
 V^{-1} A V = \begin{bmatrix}A_{11} & A_{12}\\
               0 & A_{22}
              \end{bmatrix}.
\end{align}
This is achieved when the first columns of $V$ form a basis of an invariant subspace of $A$ and the remaining columns complement them to form a basis of $\mathbb{K}^{n}$. Now, the eigenvalue problems of the smaller matrices $A_{11}$ and $A_{22}$ are considered. Repeating this method recursively leads to a \emph{spectral divide-and-conquer} scheme for the triangularization of a matrix.

The required subspace bases are acquired by employing the matrix sign function, which is computed via an iteration. In general, the operation count of spectral divide-and-conquer methods is higher than that of QR based algorithms. This is why optimized implementations that exploit available parallelism are needed. 

One direction towards more efficient implementations is to take the given structure of a matrix into account. For example, it is clear that symmetry must be exploited when available. In spectral division \eqref{Eq:SpecDiv} exploiting symmetric structure is achieved by finding an orthogonal matrix $V$. This way, a block-diagonalization is realized instead of the block-triangularization. 

This is done in the spectral divide-and-conquer approach presented in \cite{NakH13}. For symmetric matrices, the computation of the matrix sign function can be parallelized particularly well \cite{BenKS14, BenKS14a, NakF16}, making it competitive with standard approaches in a high performance setting \cite{LtaSEetal19, KeyLNetal21}. An important aspect is that spectral divide-and-conquer methods require less communication than QR based approaches. In recent years many efforts have been directed to finding communication-avoiding implementations of essential tools in numerical linear  algebra \cite{BalDHetal11}.  Spectral divide-and-conquer methods can be implemented using these available building blocks \cite{BalDD10}. On more advanced architectures, avoiding communication is more important than avoiding FLOPs in order to minimize the runtime.

In the present work, we extend the spectral divide-and-conquer approach to solve eigenvalue problems of matrices with a more general structure, called pseudosymmetry. 

A pseudosymmetric matrix is symmetric up to sign changes of rows or columns. Symmetric matrices are a subset of pseudosymmetric matrices. The complex analogue is called a pseudo-Hermitian matrix. In the following, statements are formulated for (pseudo-)symmetric matrices, but also hold for (pseudo-)Hermitian matrices.

Efforts to exploit pseudosymmetric structure led to the development of the HR algorithm \cite{Bun79, Bun81, BreG82}. It generalizes the symmetric QR algorithm and is motivated by the following observation. A generalized eigenvalue problem with symmetric matrices
\begin{align}\label{Eq:GenSymEig}
 Ax = \lambda Bx,\qquad A=A^{\tran},\quad B=B^{\tran},
\end{align}
can be cast into a pseudosymmetric standard eigenvalue problem. Neither $A$ nor $B$ need to be positive definite. If $B$ is nonsingular, it has a decomposition $B=R^{\tran}\Sigma R$, where $\Sigma$ is a diagonal matrix with $1$ or $-1$ as diagonal values. Then \eqref{Eq:GenSymEig} is equivalent to
\begin{align*}
 \Sigma R^{-\tran}AR^{-1}y = \lambda y,\qquad y=Rx.
\end{align*}
$\Sigma R^{-\tran}AR^{-1}$ is clearly a pseudosymmetric matrix. Diagonal matrices with $\pm1$ as diagonal values are called \emph{signature matrices} in the following.

The QR algorithm computes its results with great accuracy because only (implicit) orthogonal transformations are involved. This is not true for the HR algorithm, which uses ($\Sigma,\hat{\Sigma}$)-orthogonal matrices instead (also called pseudoorthogonal) \cite{Wat07}. A ($\Sigma,\hat{\Sigma}$)-orthogonal matrix $H$, where $\Sigma$ and $\hat{\Sigma}$ are two signature matrices, fulfills $H^{\tran}\Sigma H = \hat{\Sigma}$. They are used to transform a matrix to upper triangular form, similar to the QR decomposition.

On top of that, the HR algorithm suffers from the same drawbacks as the QR algorithm in a high-performance environment: It is hard to parallelize and not communication-avoiding, as explained above.

The spectral divide-and-conquer method developed in this work presents a promising alternative. It can be parallelized and only relies on building blocks for which communication-avoiding implementations exist. It can be used to compute only parts of the spectrum with reduced computational effort. Stability concerns are addressed by employing alternatives to the HR decomposition in the computation of the matrix sign function, presented in \cite{BenNP22}.

Our main motivation stems from computational quantum science. Time-dependent density functional theory in the linear-response regime (TDDFT) and the Bethe-Salpeter approach are two competing methods for computing excited states of solids or molecules in the context of a perturbed induced density matrix \cite{OniRR02, SagA09}.

The Bethe-Salpeter equation derived from many body perturbation theory \cite{OniRR02,ShaDYetal16,BenKK16} and the Casida equation derived from TDDFT for molecules \cite{Cas95} lead to pseudosymmetric eigenvalue problems in their discretized form: The matrices become symmetric when multiplied with $\Sigma=\diag{I,-I}$, where $I$ denotes the identity matrix.

Due to physical constraints, these matrices have another property, which is exploited in our proposed algorithm. The symmetric matrix resulting from multiplication with $\Sigma$ is positive definite. It was shown in \cite{BenNP22} that for these matrices the proposed iterations have the same favorable convergence properties as in the symmetric setting. Furthermore, we prove in this work that the first round of spectral division decouples the problem into a positive and a negative definite symmetric matrix.  

Pseudosymmetric matrices with these definiteness properties also  play a role in describing damped oscillations of linear systems \cite{Ves11}.

The remainder of the paper is structured as follows. Section \ref{Sec:Prel} introduces scalar products and related notation which form basic concepts used throughout the paper. This refers to a generalization of symmetry and orthogonality with respect to a scalar product defined by signature matrices. $(\Sigma,\hat{\Sigma})$-orthogonal matrices ensure the preservation of structure in the spectral division for pseudosymmetric matrices.  Furthermore, the matrix sign function is introduced as the central tool for spectral division.
Section \ref{Sec:StrucDivConq} explains the idea of spectral divide-and-conquer methods and presents a generalization of this approach for pseudosymmetric matrices. 
The acquisition of $(\Sigma,\hat{\Sigma})$-orthogonal representations of invariant subspaces is essential for structure preservation in the spectral division. In Section \ref{Sec:HypBasis}, we point out a link between QR decompositions of symmetric projection matrices (describing orthogonal projections) and Cholesky factorizations. This link exists analogously for pseudosymmetric projection matrices and the $LDL^{\tran}$ factorization. We use this insight to compute required basis representations via the $LDL^{\tran}$ factorization.
Matrices arising in the application of electronic excitation have additional definiteness properties. Section \ref{Sec:DefPseudosym} shows how these are exploited in the presented algorithms. The computation of proper basis representations simplifies to a partial Cholesky factorization (Section \ref{Subsec:DefPseudoorthRep}) and the computation of the matrix sign function can be accelerated using Zolotarev functions (Section \ref{Subsec:DefZolo}).
Section \ref{Sec:NumRes} presents the results of numerical experiments regarding the new method. 
Conclusions and further research directions are given in Section \ref{Sec:Conclusions}.

\section{Preliminaries}\label{Sec:Prel}
Following \cite{HigMMetal05} and \cite{MacMT05}, we give some basic results regarding non-Euclidian scalar products. A nonsingular matrix $M$ defines a scalar product on $\mathbb{K}^{n}$, where $\mathbb{K}\in\{\mathbb{C},\mathbb{R}\}$, that is a bilinear or sesquilinear form $\langle \cdot, \cdot\rangle_M$,  given by
\begin{align*}
\langle x, y \rangle_M = \begin{cases}
                          x^{\tran} My\text{ for bilinear forms,}\\
                          x^H My\text{ for sesquilinear forms,}
                         \end{cases} 
\end{align*}
for $x,y\in\mathbb{K}^{n}$. We use $\cdot^*$ throughout the paper to indicate transposition 
$\cdot^{\tran}$
or conjugated transposition 
$\cdot^{\herm}$
, depending on whether a bilinear or sesquilinear form is given.

For a matrix $A\in\mathbb{K}^{n\times n}$,  $A^{\star_M}\in\mathbb{K}^{n\times n}$ denotes the adjoint with respect to the scalar product defined by $M$. This is a uniquely defined matrix satisfying the identity
\begin{align*}
 \langle Ax, y \rangle_M = \langle x, A^{\star_M}y \rangle_M
\end{align*}
for all $x,y\in\mathbb{K}^{n}$. We call $A^{\star_M}$ the $M$-adjoint of $A$ and it holds
\begin{align}\label{Eq:StarM}
 A^{\star_M} = M^{-1}A^*M.
\end{align}
%
%
  A matrix $S$ is called ($M$-)self-adjoint (with respect to the scalar product induced by $M$) if $S=S^{\star_M}$.
%

Similar concepts are available for rectangular matrices. As two vector spaces of different dimensions now play a role, two distinct scalar products are considered. We give some clarifying notation following \cite{HigMT10}.
For a matrix $A\in\mathbb{K}^{m\times n}$,  $A^{\star_{M,N}}\in\mathbb{K}^{n\times m}$ denotes the adjoint with respect to the two scalar products defined by the nonsingular matrices $M\in\mathbb{K}^{m\times m}$, $N\in\mathbb{K}^{n\times n}$. This matrix is uniquely defined by the identity
\begin{align*}
 \langle Ax, y \rangle_M = \langle x, A^{\star_{M,N}}y \rangle_N 
\end{align*}
for all $x\in\mathbb{K}^{n},\ y\in\mathbb{K}^{m}$. We call $A^{\star_{M,N}}$ the $(M,N)$-adjoint of $A$ and it holds
\begin{align*}
 A^{\star_{M,N}} = N^{-1}A^*M.
\end{align*}
%
 A matrix $H\in\mathbb{K}^{m\times n}$ is called $(M,N)$-orthogonal when 
 \begin{align*}
  H^{\star_{M,N}}H = I_n.
 \end{align*}
In this case we define
\begin{align}\label{Eq:pseudoinverse}
 H^{\dagger}:=H^{\star_{M,N}} = N^{-1}H^*M.
\end{align}

For $(M,N)$-orthogonal matrices, \eqref{Eq:pseudoinverse} gives the $(M,N)$-Moore-Penrose pseudoinverse discussed in \cite{HigMT10}. This notion generalizes the well-known Moore-Penrose pseudoinverse, which is achieved by setting $M=I_m$ and $N=I_n$.


Our proposed methods rely on the matrix sign function \cite{Rob80,KenL95,Hig08}. Let $A\in\mathbb{K}^{n\times n}$ be a nonsingular matrix with no imaginary eigenvalues with Jordan canonical form
\begin{align*}
 A = Z \begin{bmatrix} J_- &\\
        &J_+
       \end{bmatrix}Z^{-1},
\end{align*}
where $J_-\in\mathbb{K}^{m\times m}$ contains the Jordan blocks associated with the eigenvalues having a negative real part and  $J_+\in\mathbb{K}^{p\times p}$ contains the Jordan blocks associated with the eigenvalues having a positive real part. Then the matrix sign function of $A$ is defined as
\begin{align}\label{eq:signDef}
 \sign{A} := Z \begin{bmatrix} -I_m &\\
        &I_p
       \end{bmatrix}Z^{-1}.
\end{align}

It follows from \eqref{eq:signDef} that the matrix sign function can be used to acquire projectors onto invariant subspaces associated with the positive and negative real parts of the spectrum.
\begin{lemma}\label{Lem:SignProj}
 \item  $P_+ = \frac{1}{2}(I_n+S)$ and $P_-=\frac{1}{2}(I-S)$ are projectors onto the invariant subspaces associated with eigenvalues in the open right and open left half-plane, respectively. 
 \end{lemma}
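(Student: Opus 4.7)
The plan is to use the Jordan-based definition of $\sign(A)$ given in \eqref{eq:signDef} directly. The key algebraic fact I would establish first is that $S^2 = I_n$: substituting the formula for $S$ gives
\begin{equation*}
 S^2 = Z\begin{bmatrix} -I_m & \\ & I_p\end{bmatrix}Z^{-1}Z\begin{bmatrix} -I_m & \\ & I_p\end{bmatrix}Z^{-1} = Z I_n Z^{-1} = I_n.
\end{equation*}
With this in hand, the idempotency of $P_+$ follows from a one-line expansion,
\begin{equation*}
 P_+^2 = \tfrac{1}{4}(I_n+S)^2 = \tfrac{1}{4}(I_n + 2S + S^2) = \tfrac{1}{4}(2 I_n + 2 S) = P_+,
\end{equation*}
and the calculation for $P_-$ is identical up to the sign of $S$. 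This verifies the projector property.

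Next, I would identify the range of each projector by expressing it in the Jordan basis. Substituting \eqref{eq:signDef} into $P_+$ gives
\begin{equation*}
 P_+ = \tfrac{1}{2}\left(Z I_n Z^{-1} + Z\begin{bmatrix}-I_m & \\ & I_p\end{bmatrix}Z^{-1}\right) = Z\begin{bmatrix} 0 & \\ & I_p\end{bmatrix}Z^{-1},
\end{equation*}
and analogously $P_- = Z\bigl[\begin{smallmatrix}I_m & \\ & 0\end{smallmatrix}\bigr]Z^{-1}$. From this closed form it is evident that $\range(P_+)$ equals the span of the last $p$ columns of $Z$, which by construction of the Jordan decomposition is the invariant subspace of $A$ associated with the eigenvalues of positive real part; similarly, $\range(P_-)$ is spanned by the first $m$ columns of $Z$ and hence is the invariant subspace for eigenvalues with negative real part.

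There is no real obstacle here: the whole argument is a direct unpacking of \eqref{eq:signDef}, and the only step requiring any care is keeping the block diagonal bookkeeping consistent between the $(-I_m,I_p)$ blocks of $S$ and the $(I_m,0)$, $(0,I_p)$ blocks that appear after adding or subtracting the identity. Since $A$ is assumed nonsingular with no purely imaginary eigenvalues, the decomposition into $J_-$ and $J_+$ exhausts the spectrum, so the two invariant subspaces are complementary and together span $\mathbb{K}^n$, consistent with $P_+ + P_- = I_n$.
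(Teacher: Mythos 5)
Your proof is correct and follows exactly the route the paper intends: the paper states Lemma~\ref{Lem:SignProj} without proof as an immediate consequence of the definition \eqref{eq:signDef}, and your direct computation ($S^2=I_n$, hence idempotency, plus the closed forms $P_+=Z\bigl[\begin{smallmatrix}0&\\&I_p\end{smallmatrix}\bigr]Z^{-1}$ and $P_-=Z\bigl[\begin{smallmatrix}I_m&\\&0\end{smallmatrix}\bigr]Z^{-1}$ identifying the ranges with the invariant subspaces) is precisely the verification left implicit there.
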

 In order to acquire projections onto invariant subspaces associated with other eigenvalue subsets, we can use the matrix sign function of a shifted $A+\sigma I$. Another possibility is to transform $A$ before computing the matrix sign function in order to acquire subspaces associated with almost arbitrary regions of the eigenvalue spectrum \cite{BaiDG97}.
   What makes the matrix sign function useful is that there exist iterative methods for its computation \cite{Hig08,KenL91b}. Among the simplest is Newton's iteration to find the roots of $f(x)=x^2-1$,
 \begin{align}\label{Eq:NewtonSign}
  X_{k+1} = \frac{1}{2} (X_k + X_k^{-1}), \quad X_0 = A.
 \end{align}
Our iteration of choice is based on Zolotarev functions and discussed in Section \ref{Subsec:DefZolo}.

\section{Structure preserving divide-and-conquer methods}\label{Sec:StrucDivConq}
The property of the matrix sign function to acquire invariant subspaces was used in the original paper \cite{Rob80} to solve algebraic Riccati equations. Later, it was used as a building block to devise parallelizable methods for eigenvalue computations of nonsymmetric matrices \cite{BaiD93,SunQ04,BenKS14}. In \cite{NakH13} a spectral divide-and-conquer algorithm for symmetric matrices is formulated, based on the relation between the matrix sign function and the polar decomposition. In this section, we generalize this approach to pseudosymmetric matrices. They are defined using signature matrices, which are diagonal matrices $\Sigma=\diag{\sigma_1,\dots,\sigma_n}$, where $\sigma_i\in\{1,-1\}$ for $i=1,\dots,n$.

\begin{definition}\label{Def:pseudosym}
 A matrix $A\in\mathbb{K}^{n\times n}$ is called \emph{pseudosymmetric (pseudo-Hermitian)} if there exists a signature matrix $\Sigma$, such that $A$ is self-adjoint with respect to the bilinear form (sesquilinear form) induced by $\Sigma$.
\end{definition}

Definition \ref{Def:pseudosym} is equivalent to $\Sigma A$ (or $A\Sigma$) being symmetric. Essentially, a pseudosymmetric matrix is symmetric up to sign changes of certain rows (or columns). This definition is slightly different than the one given, e.g., in \cite{MacMT05}, as we allow any signature matrix and not just $\Sigma_{p,q}=\begin{bmatrix}I_p&\\&-I_q\end{bmatrix}$. 

In Section \ref{sec:GenSpecDiv2} we outline the general idea of spectral division, which reduces a large eigenvalue problem to two smaller ones. Recursively applying this technique yields parallelizable methods for acquiring all eigenvalues and eigenvectors. Section \ref{Sec:SymSpecDiv} recounts how a symmetric structure can be preserved in this context. The same line of argument is applied to pseudosymmetric  matrices in Section \ref{Sec:PseuSymSpecDiv}. 

\subsection{General spectral divide-and-conquer}\label{sec:GenSpecDiv2}
It is a well-known concept to use invariant subspaces of a matrix to block-triangularize it with a similarity transformation. In the following we focus on real matrices, but everything extends to complex matrices. For real matrices we end up with $2\times 2$ matrix blocks on the diagonal for complex eigenvalues, whereas for complex matrices, this is unnecessary.

\begin{theorem}\label{Thm:GenSpecDiv}
  Let $A\in\mathbb{R}^{n\times n}$and $V_1\in\mathbb{R}^{n\times k}$  be a basis for an invariant subspace of $A$ and 
 $V=\begin{bmatrix}
     V_1&V_2
    \end{bmatrix}\in\mathbb{R}^{n\times n}$ have full rank. Then
    \begin{align*}
     V^{-1}AV = \begin{bmatrix}
                 A_{11} & A_{21}\\
                 0 & A_{22}
                \end{bmatrix}, \qquad A_{11}\in\mathbb{R}^{k\times k},\ A_{22}\in\mathbb{R}^{(n-k)\times (n-k)}.
    \end{align*}
\end{theorem}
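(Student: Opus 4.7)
The plan is to unpack the definition of \emph{invariant subspace} and use the fact that the columns of $V$ form a basis of $\mathbb{R}^n$. Since $V_1$ is a basis for an invariant subspace of $A$, each column of $AV_1$ lies in the span of the columns of $V_1$, which means there exists a (unique) matrix $B\in\mathbb{R}^{k\times k}$ such that $AV_1 = V_1 B$. I will take this $B$ to be the desired $A_{11}$.

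Next I would expand $AV_2$ in the basis formed by the columns of $V$. Because $V$ has full rank, its columns span $\mathbb{R}^n$, so there are unique matrices $C\in\mathbb{R}^{k\times(n-k)}$ and $D\in\mathbb{R}^{(n-k)\times(n-k)}$ with $AV_2 = V_1 C + V_2 D$. Setting $A_{12}:=C$ (which appears as $A_{21}$ in the statement, presumably a typo for the strictly upper block) and $A_{22}:=D$, the two identities combine into the single block matrix equation
\begin{align*}
 AV = A\begin{bmatrix} V_1 & V_2\end{bmatrix}
     = \begin{bmatrix} V_1 & V_2\end{bmatrix}\begin{bmatrix} A_{11} & A_{12}\\ 0 & A_{22}\end{bmatrix}
     = V\begin{bmatrix} A_{11} & A_{12}\\ 0 & A_{22}\end{bmatrix}.
\end{align*}
Left-multiplying by $V^{-1}$, which exists since $V$ is nonsingular, yields the claimed block upper triangular form.

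There is really no substantial obstacle here; the statement is a direct consequence of the definition of an invariant subspace together with the invertibility of $V$. The only mildly delicate point is to emphasise that the zero block in the lower-left corner arises precisely because $V_1$ is $A$-invariant: if the span of $V_1$ were not invariant, then $AV_1$ would in general have a nontrivial $V_2$-component, and that component would populate exactly the $(2,1)$-block of $V^{-1}AV$. I would close by noting that the dimensions $A_{11}\in\mathbb{R}^{k\times k}$ and $A_{22}\in\mathbb{R}^{(n-k)\times(n-k)}$ are forced by the column partition of $V$, and that the result is exactly what is needed to reduce the $n\times n$ eigenvalue problem for $A$ to two smaller ones for $A_{11}$ and $A_{22}$, setting the stage for the recursive divide-and-conquer procedure developed in the subsequent sections.
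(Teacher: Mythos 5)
Your proof is correct and is the standard argument; the paper itself states Theorem \ref{Thm:GenSpecDiv} without proof (treating it as a well-known fact), so there is nothing to diverge from. Your observation that the $A_{21}$ in the statement should read $A_{12}$ (the upper-right block) is also right, and your identification of the lower-left zero block with the invariance of $\mathrm{span}(V_1)$ is exactly the point the paper relies on.
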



Recursively applying the idea of Theorem \ref{Thm:GenSpecDiv} with shifts leads to a divide-and-conquer scheme, given in Algorithm \ref{Alg:GenSpecDiv}.

\begin{algorithm}[ht]
\caption{Unstructured spectral divide-and-conquer\label{Alg:GenSpecDiv}}
\begin{algorithmic}[1]
\Require $A\in\mathbb{R}^{n\times n}$
\Ensure $V, T$ such that $V^{-1}AV=T$ is block-upper triangular.
\State Stop if $A$ is of size $1\times 1$ or $2\times 2$ with a complex pair of eigenvalues.
\State Find shift $\sigma$ such that $A-\sigma I$ has eigenvalues with positive and negative real part and no eigenvalues with zero real part.
\State Compute $S=\sign{A-\sigma I}$ via an iteration.
\State \label{Step:Basis}Compute a basis $V_+$ of $\text{range}(S+I)$ and $V_-$ such that $V_0=\begin{bmatrix}
                                                                         V_+ & V_-
                                                                        \end{bmatrix}$ has full rank. Then
\begin{align*}
 V_0^{-1}A
 V_0 = 
 \begin{bmatrix}
  A_{11} & A_{12}\\
  0 & A_{22}
 \end{bmatrix}.
\end{align*}
 \State Repeat spectral divide-and-conquer for $A_{11}$, i.e.\ find $V_1$ such that $V_1^{-1}A_{11}V_1=T_{11}$ is block-upper triangular.
 \State Repeat spectral divide-and-conquer for $A_{22}$, i.e.\ find $V_2$ such that $V_2^{-1}A_{22}V_2=T_{22}$ is block-upper triangular.
 \State $V\leftarrow V\begin{bmatrix}
             V_1&0\\
             0&V_2
             \end{bmatrix}$, 
             $T\leftarrow\begin{bmatrix}
                 T_{11} & V_1^ {-1}A_{12}V_2\\
                 0&T_{22}
                \end{bmatrix}$.        
\end{algorithmic}
\end{algorithm}
This algorithm serves as a prototype for structure preserving methods developed in the next subsections. The key idea is to choose the subspace basis in Step \ref{Step:Basis} in a way that preserves the structure in the spectral division.
\subsection{Symmetric spectral divide-and-conquer}
\label{Sec:SymSpecDiv}
In this section we consider the symmetric eigenvalue problem, i.e.\ $A=A^{\tran}$. A structure-preserving method requires the spectral division $V^{-1}AV$ to be symmetric. This is exactly fulfilled by orthogonal matrices, i.e.\ for matrices fulfilling $V^{-1}=V^{\tran}$. A structure-preserving variant of Theorem \ref{Thm:GenSpecDiv} for symmetric matrices is given in the following.
\begin{theorem} 
  Let $A=A^{\tran}\in\mathbb{R}^{n\times n}$ and $V_1\in\mathbb{R}^{n\times k}$  be a basis of an invariant subspace of $A$ and 
 $V=\begin{bmatrix}
     V_1&V_2
    \end{bmatrix}$ be orthogonal. Then
    \begin{align*}
     V^{-1}AV=V^{\tran}AV = \begin{bmatrix}
                 A_{11} & 0\\
                 0 & A_{22}
                \end{bmatrix},\qquad A_{11}=A_{11}^{\tran}\in\mathbb{R}^{k\times k},\ A_{22}=A_{22}^{\tran}\in\mathbb{R}^{(n-k)\times(n- k)}.
    \end{align*}
\end{theorem}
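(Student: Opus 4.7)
The plan is to build on the preceding general spectral divide-and-conquer theorem (Theorem \ref{Thm:GenSpecDiv}) and then exploit the extra structure that orthogonality and symmetry jointly provide. First I would observe that the hypothesis on $V_1$ is exactly the same as in Theorem \ref{Thm:GenSpecDiv}, and that any orthogonal $V$ is in particular invertible with full column rank. Therefore that earlier theorem immediately gives
\begin{align*}
V^{\tran} A V \;=\; V^{-1} A V \;=\; \begin{bmatrix} A_{11} & A_{12} \\ 0 & A_{22} \end{bmatrix},
\end{align*}
so the $(2,1)$ block vanishes and we are left only with showing that the $(1,2)$ block vanishes as well and that the diagonal blocks are symmetric.

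Next I would use symmetry. Since $A = A^{\tran}$ and $V$ is real, the congruence $V^{\tran} A V$ is again symmetric:
\begin{align*}
\bigl(V^{\tran} A V\bigr)^{\tran} \;=\; V^{\tran} A^{\tran} V \;=\; V^{\tran} A V.
\end{align*}
Combining this with the block upper triangular form above forces $A_{12} = 0$ (since the $(1,2)$ block of a symmetric matrix equals the transpose of its $(2,1)$ block) and simultaneously forces $A_{11} = A_{11}^{\tran}$ and $A_{22} = A_{22}^{\tran}$. The dimensions $A_{11} \in \mathbb{R}^{k \times k}$ and $A_{22} \in \mathbb{R}^{(n-k) \times (n-k)}$ are inherited directly from $V_1 \in \mathbb{R}^{n \times k}$ and $V_2 \in \mathbb{R}^{n \times (n-k)}$.

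There is no real obstacle here: the argument is a two-line reduction from Theorem \ref{Thm:GenSpecDiv} together with the observation that a symmetric block triangular matrix is necessarily block diagonal. The only mild subtlety is to note that one does not need $V_2$ to be a basis of a complementary invariant subspace; orthogonality of $V$ alone is sufficient to transfer symmetry from $A$ to $V^{\tran} A V$ and thereby kill $A_{12}$ automatically.
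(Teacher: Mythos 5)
Your argument is correct: the paper states this theorem without proof, treating it as an immediate consequence of Theorem \ref{Thm:GenSpecDiv} combined with the symmetry of the congruence $V^{\tran}AV$, which is exactly the reduction you carry out. Your closing remark is also right and worth having made explicit—orthogonality of $V$ alone transfers symmetry and kills $A_{12}$, with the invariance of $\mathrm{range}(V_2)$ then following as a byproduct rather than being needed as a hypothesis.
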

The symmetric version of Algorithm \ref{Alg:GenSpecDiv} follows immediately as Algorithm  \ref{Alg:SymSpecDiv}. 
\begin{algorithm}[ht]
\caption{Symmetric spectral divide-and-conquer}\label{Alg:SymSpecDiv}
\begin{algorithmic}[1]
\Require $A=A^{\tran}\in\mathbb{R}^{n\times n}$
\Ensure Orthogonal $V$, diagonal $D$ such that $V^{\tran}AV=D$.
\State Stop if $A$ is of size $1\times 1$.
\State Find shift $\sigma$ such that $A-\sigma I$ has positive and negative eigenvalues and no zero eigenvalues.
\State Compute $S=\sign{A-\sigma I}$ via an iteration.
\State Compute a basis $V_+$ of $\text{range}(S+I)$ and $V_-$ such that $V_0=\begin{bmatrix}
                                                                         V_+ & V_-
                                                                        \end{bmatrix}$ is orthogonal. Then
\begin{align*}
 V_0^{\tran}A
 V_0 = 
 \begin{bmatrix}
  A_{11} & 0\\
  0 & A_{22}
 \end{bmatrix},\quad A_{11}=A_{11}^{\tran},\ A_{22}=A_{22}^{\tran}.
\end{align*}
 \State Repeat spectral divide-and-conquer for $A_{11}$, i.e.\ find $V_1$ such that $V_1^{\tran}A_{11}V_1=D_{11}$ is diagonal.
 \State Repeat spectral divide-and-conquer for $A_{22}$, i.e.\ find $V_2$ such that $V_2^{\tran}A_{22}V_2=D_{22}$ is diagonal.
 \State $V\leftarrow V_0\begin{bmatrix}
             V_1&0\\
             0&V_2
             \end{bmatrix}$, 
             $D\leftarrow\begin{bmatrix}
                 D_{11} & 0\\
                 0&D_{22}
                \end{bmatrix}$.        
\end{algorithmic}
\end{algorithm}

Due to the symmetry of $A$ and by restricting the subspace basis to be orthogonal, this can become a highly viable method. For symmetric $A$, $\sign{A}$ can be computed in a stable way via the QDWH iteration \cite{NakBG10,NakH12} or the Zolotarev iteration \cite{NakF16}. The basis extraction can be done by performing a rank-revealing QR decomposition or a subspace iteration \cite{NakH13}, if pivoting is considered too expensive.

\subsection{Pseudosymmetric spectral divide-and-conquer}
\label{Sec:PseuSymSpecDiv}
We now extend Section \ref{Sec:SymSpecDiv} to pseudosymmetric matrices.
The role of structure-preserving similarity transformations wmatas played by orthogonal matrices in Section \ref{Sec:SymSpecDiv}. For pseudosymmetric matrices this role is played by $(\Sigma,\hat{\Sigma})$-orthogonal matrices. 

\begin{lemma}
If $V\in\mathbb{R}^{m\times n}$ is a \emph{$(\Sigma,\hat{\Sigma})$-orthogonal} matrix and $A\in\mathbb{R}^{m\times m}$ is pseudosymmetric with respect to $\Sigma$, i.e.\ $\Sigma A = A^T\Sigma $. Then $\hat{A}=V^{\dagger}AV$ is pseudosymmetric with respect to $\hat{\Sigma}$, i.e.\ $\hat{\Sigma}\hat{A} = \hat{A}^T\hat{\Sigma}$.
\end{lemma}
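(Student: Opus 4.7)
The plan is a direct computation using the explicit form of the $(\Sigma,\hat{\Sigma})$-pseudoinverse from \eqref{Eq:pseudoinverse}. Since signature matrices are symmetric involutions (diagonal with $\pm1$ entries), we have $\hat{\Sigma}^T=\hat{\Sigma}$ and $\hat{\Sigma}^{-1}=\hat{\Sigma}$, and likewise for $\Sigma$. These identities will make both sides of the desired equality $\hat{\Sigma}\hat{A}=\hat{A}^T\hat{\Sigma}$ collapse to the same expression.

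First, I would substitute $V^{\dagger}=\hat{\Sigma}^{-1}V^T\Sigma$ into $\hat{A}=V^{\dagger}AV$ to get $\hat{A}=\hat{\Sigma}^{-1}V^T\Sigma A V$. Multiplying on the left by $\hat{\Sigma}$ immediately gives
\begin{equation*}
  \hat{\Sigma}\hat{A} \;=\; V^T\Sigma A V.
\end{equation*}
Next, I would invoke the pseudosymmetry hypothesis $\Sigma A=A^T\Sigma$ to rewrite this as $V^T A^T \Sigma V$.

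For the other side, I would transpose $\hat{A}$ carefully: $\hat{A}^T=V^T A^T \Sigma^T V (\hat{\Sigma}^{-1})^T = V^T A^T \Sigma V \hat{\Sigma}^{-1}$, using $\Sigma^T=\Sigma$ and $\hat{\Sigma}^{-T}=\hat{\Sigma}^{-1}$. Post-multiplying by $\hat{\Sigma}$ yields
\begin{equation*}
  \hat{A}^T\hat{\Sigma} \;=\; V^T A^T \Sigma V,
\end{equation*}
which matches the expression for $\hat{\Sigma}\hat{A}$ derived above. Equality of the two sides establishes $\hat{\Sigma}\hat{A}=\hat{A}^T\hat{\Sigma}$, i.e.\ $\hat{A}$ is $\hat{\Sigma}$-pseudosymmetric.

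There is no real obstacle here beyond being scrupulous about transposes and the fact that signature matrices coincide with their inverses and transposes; the argument is essentially a one-line substitution once the formula \eqref{Eq:pseudoinverse} for $V^{\dagger}$ is unpacked. The only minor thing to watch is that $V$ is rectangular, so $V^{\dagger}V=I_n$ is available but $VV^{\dagger}\ne I_m$ in general — however this asymmetry never enters the computation, since both sides only ever need the $V^T\Sigma A V$ kernel sandwiched between $\hat{\Sigma}$ factors.
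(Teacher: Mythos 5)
Your proof is correct and follows essentially the same route as the paper: both reduce $\hat{\Sigma}\hat{A}$ to $V^{\tran}\Sigma A V$ via the explicit formula $V^{\dagger}=\hat{\Sigma}V^{\tran}\Sigma$, apply the pseudosymmetry of $A$, and identify the result with $\hat{A}^{\tran}\hat{\Sigma}$; the paper writes this as a single chain of equalities whereas you show both sides equal the common expression $V^{\tran}A^{\tran}\Sigma V$. No gaps.
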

\begin{proof}
With $V^{\dagger}= \hat{\Sigma} V^T \Sigma$, $\Sigma V = (V^{\dagger})^T\hat{\Sigma}$, $\Sigma^2=I_m$ and $\hat{\Sigma}^2=I_n$ we have
\begin{align*}
 \hat{\Sigma}(V^{\dagger}AV) = V^{\tran}{\Sigma}AV = V^{\tran}A^{\tran}{\Sigma}V = V^{\tran}A^{\tran}(V^\dagger)^{\tran}\hat{\Sigma} = (V^{\dagger}AV)^{\tran}\hat{\Sigma}.
\end{align*}
\end{proof}

What $({\Sigma},\hat{\Sigma})$-orthogonal matrices have in common with orthogonal matrices is that their \mbox{(pseudo-)} inverses can be easily computed via \eqref{Eq:pseudoinverse} in the form of
\begin{align*}
 V^{\dagger} = \hat{\Sigma}V^{\tran}{\Sigma}.
\end{align*}
For square matrices it holds $V^{-1}=V^{\dagger}$ and $V^{\dagger}AV$ constitutes a similarity transformation.

Methods for computing these matrices include the $HR$ decomposition \cite{BunB85} and methods described in \cite{BenNP22}. They prescribe $\Sigma$ and yield $\hat{\Sigma}$ and the $(\Sigma,\hat{\Sigma})$-orthogonal matrix $H$. We do not actually care about how $\hat{\Sigma}$ looks exactly, as long as it is a signature matrix. This way, pseudosymmetry as we defined it in Definition \ref{Def:pseudosym}, not being  bound to a specific $\Sigma$, is preserved. These kind of matrices, i.e.\ $(\Sigma,\hat{\Sigma})$-orthogonal matrices, where $\hat{\Sigma}$ does not matter, are sometimes called ``hyperexchange'' (e.g.\ in \cite{Seg10,Seg14}).

These observations can be used to formulate a pseudosymmetric variant of Algorithm \ref{Alg:GenSpecDiv}, given in Algorithm \ref{Alg:PseuDivConq}. In this algorithm, the property preserved in the spectral division is the pseudosymmetry. This means that ${\Sigma}$ does not stay fixed, but is permuted and truncated in each division step. 

\begin{algorithm}[t]
\caption{Pseudosymmetric spectral divide-and-conquer}
\label{Alg:PseuDivConq}
\begin{algorithmic}[1]
\Require Signature matrix ${\Sigma}$, pseudosymmetric $A$ with respect to $\Sigma$, i.e.\ $\Sigma A=(\Sigma A)^{\tran}$.
\Ensure Signature matrix $\hat{\Sigma}$, and  $({\Sigma},\hat{\Sigma})$-orthogonal $V$ such that $V^\dagger AV=D$ is block-diagonal with blocks no larger than $2\times 2$.
\State Stop if $A$ is of size $1\times 1$ or $2\times 2$ with a complex pair of eigenvalues.
\State Find shift $\sigma$ such that $A-\sigma I$ has eigenvalues with positive and negative real part and no eigenvalues with zero real part.
\State\label{Alg:PseuDivConq:Step:Sign}Compute $S=\sign{A-\sigma I}$ via an iteration.
\State\label{Alg:PseuDivConq:Step:Basis}Compute a basis $V_+$ of $\text{range}(S+I)$ and $V_-$ such that $V_0=\begin{bmatrix}
                                                                         V_+ & V_-
                                                                        \end{bmatrix}$ is $({\Sigma},\Sigma_{0})$-orthogonal with $\Sigma_{0}=\begin{bmatrix}
                                                                        \Sigma_{+}&\\
                                                                        &\Sigma_{-}\end{bmatrix}$. Then
\begin{align*}
 V_0^{\dagger}A
 V_0 = 
 \begin{bmatrix}
  A_{11} & 0\\
  0 & A_{22}
 \end{bmatrix},\quad \Sigma_{+}A_{11}=(\Sigma_{+}A_{11})^{\tran},\ \Sigma_{-}A_{22}=(\Sigma_{-}A_{22})^{\tran}.
\end{align*}
 \State Repeat Spectral divide-and-conquer for $A_{11}$ with ${\Sigma}:=\Sigma_+$, i.e.\ find $(\Sigma_+,\Sigma_1)$-orthogonal $V_1$ such that $V_1^\dagger A_{11}V_1=D_{11}$ is block-diagonal.

  \State Repeat Spectral divide-and-conquer for $A_{22}$ with ${\Sigma}:=\Sigma_{-}$, i.e.\ find $(\Sigma_-,\Sigma_2)$-orthogonal $V_2$ such that $V_2^\dagger A_{22}V_2=D_{22}$ is block-diagonal.
 \State $V\leftarrow V_0\begin{bmatrix}
             V_1&0\\
             0&V_2
             \end{bmatrix}$, $\hat{\Sigma} \leftarrow \begin{bmatrix}\Sigma_1&\\&\Sigma_2\end{bmatrix}$, 
             $D\leftarrow\begin{bmatrix}
                 D_{11} & 0\\
                 0&D_{22}
                \end{bmatrix}$.        
\end{algorithmic}
\end{algorithm}



\section{Computing $(\Sigma,\hat{\Sigma})$-orthogonal representations of subspaces}
\label{Sec:HypBasis}
Symmetric spectral divide-and-conquer methods rely on variants of the QR decomposition. The natural generalization in the indefinite context is the hyperbolic QR decomposition. 

\begin{proposition}[The hyperbolic QR decomposition \cite{BunB85}]\label{Thm:HypQR}
 Let $\Sigma\in\mathbb{R}^{m\times m}$ be a signature matrix, $A\in\mathbb{R}^{m\times n},\ m\geq n$. Suppose all the leading principal submatrices of $A^{\tran}\Sigma A$ are nonsingular. Then there exists a permutation $P$, a signature matrix $\hat{\Sigma}=P^{\tran}\Sigma P$, a $(\Sigma,\hat{\Sigma})$-orthogonal matrix $H\in\mathbb{R}^{m\times n}$ (i.e.\ $H^{\tran}\Sigma H= \hat{\Sigma}$), and an upper triangular matrix $R\in\mathbb{R}^{n\times n}$, such that
 \begin{align*}
  A=H\begin{bmatrix}
R\\
0
     \end{bmatrix}.
 \end{align*}
\end{proposition}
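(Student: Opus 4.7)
The plan is to derive the hyperbolic QR decomposition from the classical symmetric $LDL^{\tran}$ factorization of the $n\times n$ matrix $B := A^{\tran}\Sigma A$. The hypothesis that every leading principal submatrix of $B$ is nonsingular is precisely the standard condition guaranteeing the existence (and uniqueness) of a factorization
\begin{align*}
 B = L D L^{\tran},
\end{align*}
with $L$ unit lower triangular and $D$ diagonal and nonsingular (so in particular $B$ itself is nonsingular, whence $A$ has full column rank).

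Given such a factorization, I would introduce the signature matrix $\Xi := \mathrm{diag}(\mathrm{sign}(D_{11}),\dots,\mathrm{sign}(D_{nn}))$, so that $D = |D|^{1/2}\,\Xi\,|D|^{1/2}$, and set
\begin{align*}
 R := |D|^{1/2} L^{\tran} \in \mathbb{R}^{n\times n}, \qquad H := A R^{-1} \in \mathbb{R}^{m\times n}.
\end{align*}
Then $R$ is upper triangular with positive diagonal (hence invertible), $A = HR$ by construction, and a direct calculation using $R^{-\tran} = |D|^{-1/2} L^{-1}$ and $R^{-1} = L^{-\tran}|D|^{-1/2}$ gives
\begin{align*}
 H^{\tran}\Sigma H = R^{-\tran}(L D L^{\tran})R^{-1} = |D|^{-1/2}\, D\, |D|^{-1/2} = \Xi,
\end{align*}
so $H$ is $(\Sigma,\Xi)$-orthogonal. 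Interpreting $\begin{bmatrix}R\\0\end{bmatrix}$ as $R$ (the zero block is vacuous since $H$ already has $n$ columns), this is the claimed factorization.

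The main obstacle is to identify $\Xi$ with a matrix of the form $P^{\tran}\Sigma P$ for some (partial) permutation $P$, i.e.\ to show that the sign pattern on the diagonal of $\Xi$ is inherited from that of $\Sigma$. This is a Sylvester-type inertia argument: from the congruence $B = A^{\tran}\Sigma A$ combined with $A$ having full column rank, the inertia of $B$ equals the inertia of the quadratic form $x\mapsto x^{\tran}\Sigma x$ restricted to the $n$-dimensional subspace $\text{range}(A)\subseteq\mathbb{R}^m$, and any such restriction has at most as many positive (respectively negative) directions as $\Sigma$ itself. Consequently the number of $+1$ and $-1$ entries of $\Xi$ are bounded by the corresponding counts in $\Sigma$, and one may select $n$ of the diagonal positions of $\Sigma$ whose signs coincide, in order, with those of $\Xi$. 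The corresponding selection matrix is the desired partial permutation $P$, yielding $\hat{\Sigma} := P^{\tran}\Sigma P = \Xi$ and completing the proof.
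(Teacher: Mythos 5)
Your proof is correct, but it follows a different route from the one the paper relies on: the paper states this result as known, citing \cite{BunB85}, and the construction it alludes to (and that \cite{Wat07} carries out) builds $H$ column by column using elementary orthogonal and hyperbolic transformations that annihilate entries below the diagonal. You instead derive the factorization indirectly from the $LDL^{\tran}$ factorization of $B=A^{\tran}\Sigma A$: the hypothesis on the leading principal minors gives $B=LDL^{\tran}$ with $D$ diagonal and nonsingular, the choice $R=|D|^{1/2}L^{\tran}$, $H=AR^{-1}$ yields $H^{\tran}\Sigma H=\mathrm{sign}(D)$, and a Sylvester/interlacing argument on the restriction of the form $x\mapsto x^{\tran}\Sigma x$ to $\mathrm{range}(A)$ shows that $\mathrm{sign}(D)$ can be realized as $P^{\tran}\Sigma P$ for a selection of $n$ diagonal positions of $\Sigma$. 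All steps check out, including your reading of the two notational wrinkles in the statement (the vacuous zero block under $R$, and $P$ being a partial permutation so that $P^{\tran}\Sigma P$ is $n\times n$). What each approach buys: the column-elimination construction is algorithmic and produces $H$ as a product of structured factors, which is how the decomposition is classically computed; your $LDL^{\tran}$ route is shorter, makes the inertia bookkeeping transparent, and is in fact the very link the paper itself exploits later (Theorem \ref{Thm:HRisLDL} and the \texttt{LDLIQR2} algorithm of \cite{BenNP22}), at the cost of forming $A^{\tran}\Sigma A$, which is why the paper treats it as a computational alternative rather than the defining construction.
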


Similar to the orthogonal QR decomposition, it can be computed by applying transformations that introduce zeros below the diagonal, column by column. Details can e.g.\ be found in \cite{Wat07}. In \cite{SinS00}, the indefinite QR decomposition is presented, which improves stability by allowing $2\times 2$ blocks on the diagonal of $R$ and additional pivoting. This variant can also be computed via the (pivoted) $LDL^{\tran}$ decomposition of $A^{\tran}\Sigma A$; a link which was exploited in \cite{BenNP22} and \cite{BenP22}. There, the stability is improved by applying this method twice.

In the context of this work we aim to compute the indefinite QR decomposition of a pseudosymmetric projection matrix. We will see that in this special case, an indefinite QR decomposition can be computed via the $LDL^{\tran}$ decomposition without the need to form $A^{\tran}\Sigma A$. We do not make any statement about the stability of the proposed computations, as these considerations go beyond the scope of this paper, but make empirical observations in the numerical experiments presented in Section \ref{Sec:NumRes}.

We start with an observation regarding the symmetric divide-and-conquer method. Here, the matrix sign function computes a symmetric projection matrix, representing an orthogonal projection. 

\begin{lemma}\label{Lem:QRisChol} Let
 $P\in\mathbb{R}^{n\times n}$ be an orthogonal projection matrix, i.e.\ $P^2=P$ and   $P=P^{\tran}$, with rank $r$. Let $R^TR=P$, where $R\in\mathbb{R}^{r\times n}$, be a low-rank Cholesky factorization, where $R$ has full row rank. Then $R^T$ has orthogonal columns, i.e.\ $RR^{\tran}=I_r$, and $R^TR=P$ is a thin QR decomposition of P.
\end{lemma}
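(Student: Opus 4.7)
The plan is to derive $RR^{\tran}=I_r$ directly from the idempotency of $P$, and then observe that the factorization $P=R^{\tran}R$ automatically has the shape of a thin QR decomposition.

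First I would start from $P^2=P$. Substituting the Cholesky factorization gives
\begin{align*}
R^{\tran}R = P = P^2 = (R^{\tran}R)(R^{\tran}R) = R^{\tran}(RR^{\tran})R,
\end{align*}
so $R^{\tran}(RR^{\tran}-I_r)R = 0$. Now I would exploit that $R\in\mathbb{R}^{r\times n}$ has full row rank by assumption, which makes $RR^{\tran}\in\mathbb{R}^{r\times r}$ symmetric positive definite and in particular invertible. This gives a concrete right inverse $R^{+}=R^{\tran}(RR^{\tran})^{-1}$ with $RR^{+}=I_r$.

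Next I would multiply the identity $R^{\tran}R = R^{\tran}(RR^{\tran})R$ from the left by $(R^{+})^{\tran}=(RR^{\tran})^{-1}R$ and from the right by $R^{+}$. The left-hand side becomes $(RR^{+})^{\tran}(RR^{+}) = I_r$, while the right-hand side collapses to $(RR^{+})^{\tran}(RR^{\tran})(RR^{+}) = RR^{\tran}$. Hence $RR^{\tran}=I_r$, which is exactly the claim that $R^{\tran}$ has orthonormal columns.

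Finally, to identify $P=R^{\tran}R$ as a thin QR decomposition, I would note that $R$ being a Cholesky factor is by convention upper triangular (in the low-rank sense: the $r\times n$ block is upper trapezoidal with the leading $r\times r$ part upper triangular), while we have just shown that its transpose $R^{\tran}\in\mathbb{R}^{n\times r}$ has orthonormal columns. Setting $Q:=R^{\tran}$, the decomposition $P = Q R$ with $Q$ having orthonormal columns and $R$ upper triangular is precisely a thin QR factorization of the rank-$r$ matrix $P$. I do not anticipate a real obstacle here; the only subtle point is recognising that full row rank of $R$ is what turns the idempotency equation into the sought orthogonality via invertibility of $RR^{\tran}$.
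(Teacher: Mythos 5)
Your proposal is correct and follows essentially the same route as the paper: substitute the factorization into $P=P^2$ to get $R^{\tran}R=R^{\tran}RR^{\tran}R$ and then cancel $R^{\tran}$ on the left and $R$ on the right using the full row rank of $R$. The paper states this cancellation without elaboration, whereas you make it explicit via the right inverse $R^{+}=R^{\tran}(RR^{\tran})^{-1}$; this is a welcome clarification, not a different argument.
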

\begin{proof}
 Because P is positive semi-definite, the low-rank Cholesky factorization exists. From $P=P^2$ follows $R^{\tran}R=R^{\tran}RR^{\tran}R$ and therefore $RR^{\tran}=I_r$.
\end{proof}

Lemma \ref{Lem:QRisChol} states that for projection matrices attained via the matrix sign function, the low-rank Cholesky and the thin QR decomposition are equivalent.

Let $P_+=\frac{1}{2}(I_n+\sign{A})$ be the projection on the subspace of $A$ associated with positive eigenvalues. The advantage of computing the (full) QR decomposition $\begin{bmatrix}Q_+&Q_-\end{bmatrix}\begin{bmatrix}R\\0\end{bmatrix}$ is that we immediately get a basis $Q_-$ for the complementing subspace, associated with negative eigenvalues. The Cholesky factorization applied in the sense of Lemma \ref{Lem:QRisChol} can only yield a thin QR decomposition. However, the same procedure can be applied to $P_-=\frac{1}{2}(I_n-\sign{A}$. The two thin QR decompositions can be combined to form a full one. Indeed, let $Q_+$ and $Q_-$ be acquired from $P_+$ and $P_-$ via Lemma \ref{Lem:QRisChol}. The identities $Q_+^{\tran}Q_+=I$ and $Q_-^{\tran}Q_- =I$ follow immediately from the orthogonality proven in the lemma. From $P_+=Q_+Q_+^{\tran}$ follows $Q_+{^{\tran}}=Q_+^{\tran}P_+$ and from $P_-=Q_-Q_-^{\tran}$ follows $Q_-=P_-Q_-$. From the definition of the projectors in Lemma \ref{Lem:SignProj} we have $P_+P_-=0$ and therefore  $Q_+^{\tran}Q_- = Q_+P_+^{\tran}P_-Q_- = 0 $.

Algorithm \ref{Alg:OrthSubspaceChol} shows how Lemma \ref{Lem:QRisChol} can be used to compute an orthogonal representation of an invariant subspace of a symmetric matrix. In Step \ref{Alg:OrthSubspaceChol:Rank} we use the trace of a projection matrix to determine its rank. For badly conditioned matrices, pivoting could be included in the computation of the Cholesky factorization. Lemma \ref{Lem:QRisChol} does not need to assume the triangular shape of $R$ to show that its rows are orthogonal. 

\begin{algorithm}
\caption{Compute orthogonal invariant subspace representations of a symmetric matrix via Cholesky.}\label{Alg:OrthSubspaceChol}
 \begin{algorithmic}[1]
  \Require $A=A^{\tran}\in\mathbb{R}^{n\times n}$ nonsingular.
  \Ensure An orthogonal basis $Q=\begin{bmatrix}
                                  Q_+ & Q_-
                                 \end{bmatrix}$, where $Q_+$ is a basis of the invariant subspace of $A$ associated with positive eigenvalues, $Q_-$ is a basis of the invariant subspace of $A$ associated with negative eigenvalues.  
 \State $S\leftarrow \sign{A}$.
 \State $P_+ \leftarrow \frac{1}{2} (I_n+S)$.
 \State Compute $\rank{P_+}=:r_+\leftarrow\trace{P_+}$.\label{Alg:OrthSubspaceChol:Rank}
 \State $Q_{1,+}\leftarrow \texttt{chol}(P_+(1:r_+,1:r_+)$.
 \State $Q_+\leftarrow\begin{bmatrix}
            Q_{1,+}\\
            P_+(r_++1:n,1:r_+)Q_{1,+}^{-\tran}
           \end{bmatrix}$.\label{Alg:OrthSubspaceChol:extendChol1}
  \State $P_- \leftarrow \frac{1}{2} (I_n-S)$.
 \State Compute $\rank{P_-}=:r_-\leftarrow n-r_+$.
 \State $Q_{1,-}\leftarrow \texttt{chol}(P_-(1:r_-,1:r_-)$.
 \State $Q_-\leftarrow\begin{bmatrix}
            Q_{1,-}\\
            P_+(r_-+1:n,1:r_-)Q_{1,-}^{-\tran}
           \end{bmatrix}$.\label{Alg:OrthSubspaceChol:extendChol2}
 \end{algorithmic}
\end{algorithm}

In the symmetric context, computing the QR decomposition like this does not have an obvious benefit over computing a QR decomposition the standard way. However, it can be generalized to the indefinite case. Here, an $LDL^{\tran}$ decomposition can be used instead of a hyperbolic QR decomposition, which is much more widely used. Established algorithms and highly-optimized implementations are available and ready to use, e.g.\ in MATLAB as \texttt{ldl} command. Details are given in the following theorem.

\begin{theorem}\label{Thm:HRisLDL} 
$\Sigma$ is a given signature matrix, $P\in\mathbb{R}^{n\times n}$ is a projection matrix and pseudosymmetric with respect to $\Sigma$, i.e.\ $P^2=P$ and   $\Sigma P\Sigma=P^{\tran}$, with rank $r$. Let $R^T\hat{\Sigma}R=\Sigma P$, where $R\in\mathbb{R}^{r\times n}$, be a scaled low-rank $LDL^{\tran}$ factorization, where $R$ has full row rank and $\hat{\Sigma}\in\mathbb{R}^{r\times r}$ is another signature matrix. Then $R^T$ is $(\Sigma,\hat{\Sigma})$-orthogonal,, i.e.\ $R{\Sigma}R^{\tran}=\hat{\Sigma}$, and $HR=P$ with $H=\Sigma R^{\tran} \hat{\Sigma}$ is a decomposition of P, where $H$ is $(\Sigma,\hat{\Sigma})$-orthogonal.
\end{theorem}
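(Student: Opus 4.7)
The plan is to mirror the structure of Lemma \ref{Lem:QRisChol}, which derived orthogonality of the Cholesky factor from the idempotency of a symmetric projection. The indefinite analogue here replaces the identity with $\hat{\Sigma}$ and uses the pseudosymmetry relation to rewrite things in a symmetric form. The core observation is that $\Sigma P$ is genuinely symmetric (since $\Sigma P \Sigma = P^T$ gives $(\Sigma P)^T = P^T \Sigma = \Sigma P$), so a factorization of the form $R^T \hat{\Sigma} R$ makes sense, and the idempotency of $P$ should translate into a ``projector-like'' identity for this factorization.

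First, I would derive the key scalar identity
\begin{align*}
(\Sigma P)\,\Sigma\,(\Sigma P) \;=\; \Sigma P \Sigma \Sigma P \;=\; \Sigma P^2 \;=\; \Sigma P,
\end{align*}
using $\Sigma^2 = I$ and $P^2 = P$. Substituting the factorization $\Sigma P = R^T \hat{\Sigma} R$ on both sides then gives
\begin{align*}
R^T \hat{\Sigma} \bigl(R\,\Sigma\,R^T\bigr)\hat{\Sigma} R \;=\; R^T \hat{\Sigma} R.
\end{align*}
Because $R$ has full row rank, $R^T$ has full column rank on the left and $R$ has full row rank on the right, so one may cancel them to conclude $\hat{\Sigma}(R\Sigma R^T)\hat{\Sigma} = \hat{\Sigma}$, and after multiplying by $\hat{\Sigma}$ on both sides, $R\,\Sigma\,R^T = \hat{\Sigma}$. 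This is exactly the $(\Sigma,\hat{\Sigma})$-orthogonality of $R^T$, since the pseudoinverse of $R^T$ is $\hat{\Sigma}(R^T)^T \Sigma = \hat{\Sigma} R \Sigma$.

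With $R\Sigma R^T = \hat{\Sigma}$ in hand, the remaining two claims are short direct computations. Setting $H = \Sigma R^T \hat{\Sigma}$, one checks
\begin{align*}
HR \;=\; \Sigma R^T \hat{\Sigma} R \;=\; \Sigma(\Sigma P) \;=\; P,
\end{align*}
and similarly
\begin{align*}
H^\dagger H \;=\; \hat{\Sigma} H^T \Sigma H \;=\; \hat{\Sigma}\,\hat{\Sigma} R \Sigma\, \Sigma\, \Sigma R^T \hat{\Sigma} \;=\; R\Sigma R^T \hat{\Sigma} \;=\; \hat{\Sigma}^2 \;=\; I_r,
\end{align*}
giving the $(\Sigma,\hat{\Sigma})$-orthogonality of $H$.

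The only genuinely nontrivial step is the cancellation after substituting the factorization, and that rests entirely on the full-rank hypothesis on $R$; everything else is bookkeeping with $\Sigma^2 = \hat{\Sigma}^2 = I$. I do not foresee any subtlety beyond making sure the left- and right-cancellations are justified explicitly by noting that $R R^T$ is invertible (and hence so is $R^T$ viewed as an injective operator and $R$ as a surjective operator), which is the indefinite counterpart of the argument that $R^T R = R^T R R^T R \Rightarrow R R^T = I_r$ used in Lemma \ref{Lem:QRisChol}.
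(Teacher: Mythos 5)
Your proposal is correct and follows essentially the same route as the paper's proof: both derive $R\Sigma R^{\tran}=\hat{\Sigma}$ by substituting the factorization into the idempotency relation (the paper uses $P=P^2$ with $P=\Sigma R^{\tran}\hat{\Sigma}R$, you use the equivalent $(\Sigma P)\Sigma(\Sigma P)=\Sigma P$), cancelling $R^{\tran}$ and $R$ via the full-rank hypothesis, and then reading off the $(\Sigma,\hat{\Sigma})$-orthogonality of $H=\Sigma R^{\tran}\hat{\Sigma}$ and the identity $HR=P$. Your explicit verification of $HR=P$ and $H^{\dagger}H=I_r$ only spells out steps the paper treats as immediate.
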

\begin{proof}
With $P=\Sigma R^{\tran} \hat{\Sigma} R$ and $P=P^2$ it follows $\Sigma R^{\tran} \hat{\Sigma} R=\Sigma R^{\tran} \hat{\Sigma} R\Sigma R^{\tran} \hat{\Sigma} R$ and therefore 
\begin{align}\label{Eq:RSigmaOrth}
\hat{\Sigma} = \hat{\Sigma} R\Sigma R^{\tran} \hat{\Sigma}\quad
\Leftrightarrow\quad \hat{\Sigma}=R\Sigma R^{\tran}.
\end{align}
We used $\hat{\Sigma}^2=I_r$. \eqref{Eq:RSigmaOrth} is equivalent to $H:=R^{\dagger}=\Sigma R^{\tran} \hat{\Sigma}$ being 
$(\Sigma,\hat{\Sigma})$-orthogonal: $H^{\tran}\Sigma H = \hat{\Sigma}$. We therefore have a decomposition $P=\Sigma R^{\tran} \hat{\Sigma} R=HR$.
\end{proof}
If $R$ in Theorem \ref{Thm:HRisLDL} is computed with the Bunch-Kaufman algorithm \cite{BunK77} (e.g.\ MATLAB \texttt{ldl}), it can be a permuted block-triangular matrix and stability can be improved. Then $P=HR$ is not a hyperbolic QR decomposition in the strict sense given in Theorem \ref{Thm:HypQR}. This is not important here, as we are only interested in the subspace given by $H$. 
The indefinite variant of Algorithm \ref{Alg:OrthSubspaceChol} is given in Algorithm \ref{Alg:HypSubspaceLDL}.
\begin{algorithm}[t]
\caption{Compute hyperbolic invariant subspace representations of a pseudosymmetric projection matrix via $LDL^{\tran}$} \label{Alg:HypSubspaceLDL}
 \begin{algorithmic}[1]
  \Require Signature matrix $\Sigma$, $A=\Sigma A^{\tran}\Sigma\in\mathbb{R}^{n}$ nonsingular.
  \Ensure A signature matrix $\hat{\Sigma}$, which is a permuted variant of $\Sigma$, a $(\Sigma,\hat{\Sigma})$-orthogonal basis $Q=\begin{bmatrix}
                                  Q_+ & Q_-
                                 \end{bmatrix}$, i.e.\ $Q^{\tran}\Sigma Q= \hat{\Sigma}$, where $Q_+$ is a basis of the invariant subspace of $A$ associated with positive eigenvalues, $Q_-$ is a basis of the invariant subspace of $A$ associated with negative eigenvalues.  
 \State $S\leftarrow \sign{A}$.
 \State $P_+ \leftarrow \frac{1}{2} (I_n+S)$.
 \State Compute $\rank{P_+}=:r_+\leftarrow\trace{P_+}$.\label{Alg:HypSubspaceLDL:Rank}
 \State $[L_+,D_+]\leftarrow \texttt{ldl}(\Sigma P_+)$.
 \State Diagonalize $D_+$ if it has blocks on the diagonal: $[V_+,D_+]\leftarrow\texttt{eig}(D_+)$, such that $D_+(1:r_+,1:r_+)$ contains the nonzero diagonal values of $D_+$.
 \State $R_+\leftarrow (L_+V_+(:,1:r_+)D_+(1:r_+,1:r_+)^{\frac{1}{2}})^{\tran}$, \quad $\hat{\Sigma}_+\leftarrow\sign{D_+(1:r_+,1:r_+)}$.\label{Alg:HypAlg:HypSubspaceLDL:Trunc1}

 \State $P_- \leftarrow \frac{1}{2} (I_n-S)$.
 \State Compute $\rank{P_-}=:r_-\leftarrow n-r_+$.
 \State $[L_-,D_-]\leftarrow \texttt{ldl}(\Sigma P_-)$.
 \State Diagonalize $D_-$ if it has blocks on the diagonal: $[V_-,D_-]\leftarrow\texttt{eig}(D_-)$, such that $D_-(1:r_-,1:r_-)$ contains the nonzero diagonal values of $D_-$.
  \State $R_-\leftarrow (L_-V_-(:,1:r_-)D_-(1:r_-,1:r_-)^{\frac{1}{2}})^{\tran}$,\quad  $\hat{\Sigma}_-\leftarrow\sign{D_-(1:r_-,1:r_-)}$.\label{Alg:HypAlg:HypSubspaceLDL:Trunc2}
  \State $\hat{\Sigma}\leftarrow\diag{\Sigma_+,\Sigma_-}$.
  \State $Q_+ \leftarrow \Sigma R_+^{\tran} \hat{\Sigma}$,\quad $Q_- \leftarrow \Sigma R_-^{\tran} \hat{\Sigma}$.
 \end{algorithmic}
\end{algorithm}

In contrast to the MATLAB function \texttt{chol}, \texttt{ldl} is not affected by singular matrices, such as the given projectors.  This is why steps \ref{Alg:OrthSubspaceChol:extendChol1} and \ref{Alg:OrthSubspaceChol:extendChol2} in Algorithm \ref{Alg:OrthSubspaceChol} do not have a correspondence in Algorithm \ref{Alg:HypSubspaceLDL}. The Cholesky-based algorithm (Algorithm \ref{Alg:OrthSubspaceChol}) computes the Cholesky factorization of the upper left block and expands it in order to get a low-rank version. The $LDL^{\tran}$-based algorithm (Algorithm \ref{Alg:HypSubspaceLDL}) on the other hand computes an $LDL^{\tran}$ decomposition of the whole matrix, which we then truncate in Steps \ref{Alg:HypAlg:HypSubspaceLDL:Trunc1} and \ref{Alg:HypAlg:HypSubspaceLDL:Trunc2}.

\section{Definite pseudosymmetric matrices}\label{Sec:DefPseudosym}
In this section we consider pseudosymmetric matrices with an additional property. We call a pseudosymmetric matrix $A$ with respect to a signature matrix $\Sigma$ \emph{definite} if
$\Sigma A$ is positive definite. 

The Bethe-Salpeter equation (BSE) approach is a state-of-the art method for computing optical properties of materials and molecules, derived from many-body perturbation theory. After appropriate discretization, eigenvalues and eigenvectors of a complex structured matrix
\begin{align}\label{Eq:BSE1}
 H_{\text{BSE}} = \begin{bmatrix}
            A_{\text{BSE}} & B_{\text{BSE}}\\
            -B_{\text{BSE}}^{\herm} & -A_{\text{BSE}}^{\tran}
           \end{bmatrix},\qquad A_{\text{BSE}}=A_{\text{BSE}}^{\herm},\quad B_{\text{BSE}}=B_{\text{BSE}}^{\tran}
\end{align}
are sought \cite{OniRR02}. A similar eigenvalue problem arises when molecules are considered within time-dependent density functional theory in the linear response regime. Here, the Casida equation can be recast into an eigenvalue problem of the real matrix
\begin{align}\label{Eq:Casida}
 H_{\text{Cas}} = \begin{bmatrix}
            A_{\text{Cas}} & B_{\text{Cas}}\\
            -B_{\text{Cas}}& -A_{\text{Cas}}
           \end{bmatrix},\qquad A_{\text{Cas}}=A_{\text{Cas}}^{\tran},\quad B_{\text{Cas}}=B_{\text{Cas}}^{\tran}.
\end{align}
Considering a Bethe-Salpeter approach within Hartree-Fock theory for molecules leads to a matrix with the same structure \cite{BenKK16}.

For crystalline solids, the periodic structure can be exploited and with a proper choice of basis functions the resulting matrix has the form \cite{SanMK15}
\begin{align}\label{Eq:BSE2}
 H_{\text{BSE},2} = \begin{bmatrix}
            A_{\text{BSE},2} & B_{\text{BSE},2}\\
            -B_{\text{BSE},2} & -A_{\text{BSE},2}
           \end{bmatrix}
           ,\qquad A_{\text{BSE},2}=A_{\text{BSE},2}^{\herm},\quad B_{\text{BSE},2}=B_{\text{BSE},2}^{\herm}.
\end{align}
The setup \eqref{Eq:BSE2} is essentially a complex version of \eqref{Eq:Casida}. A more detailed analysis of the special structure in \eqref{Eq:BSE1} and \eqref{Eq:BSE2} is given in \cite{BenP22}.

All of these matrices are obviously pseudosymmetric with respect to $\Sigma=\diag{I,-I}$. Furthermore, they are typically definite, i.e.\ $\Sigma H$ is positive definite for any $H$ defined in \eqref{Eq:BSE1}, \eqref{Eq:Casida} or \eqref{Eq:BSE2}.

\subsection{Decoupling the indefinite eigenvalue problem into two symmetric definite problems}


In the following, we explain how the spectral divide-and-conquer algorithm described in Section \ref{Sec:PseuSymSpecDiv} simplifies greatly for definite pseudosymmetric matrices. Essentially, the problem can be reduced to two Hermitian positive definite eigenvalue problems after just one spectral division step.


As a first result, we present the following theorem, clarifying the spectral structure of definite pseudosymmetric matrices. It is an extension of Theorem 5 in \cite{BenP22}, additionally clarifying the structure of the eigenvectors, and a more general variant of Theorem 3 in \cite{ShaDYetal16}. Our version is independent of the additional structure of Bethe-Salpeter matrices given in \eqref{Eq:BSE1}. It can be proven in a similar fashion relying on the simultaneous diagonalization of $\Sigma A$ and $\Sigma$. 
%

\begin{theorem}\label{Thm:EVREalSigmaOrth}
Let $A\in\mathbb{K}^{n\times n}$ be a definite pseudosymmetric matrix with respect to $\Sigma$, where $\Sigma$ has $p$ positive and $n-p$ negative diagonal entries.  Then A has only real, nonzero eigenvalues, of which $p$ are positive and $n-p$ are negative. There is an eigenvalue decomposition $AV = V \Lambda$, $\Lambda=\diag{\lambda_1,\dots,\lambda_n}$, where $\lambda_1,\dots, \lambda_p >0$, $\lambda_{p+1},\dots, \lambda_n<0$, such that\begin{align}\label{Eq:VSigmaOrth}
V^*\Sigma V=\begin{bmatrix}I_p&\\&-I_{n-p}\end{bmatrix}.                                                                                                                                                                                                                                                                                                                                                                                                                                                                                                                                                                                                                                                                                                                                       \end{align}
\end{theorem}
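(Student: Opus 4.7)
The plan is to reduce the claim to the standard simultaneous diagonalization theorem for a Hermitian pencil with one definite member. The key observation is that since $A$ is $\Sigma$-self-adjoint, $(\Sigma A)^* = A^*\Sigma = \Sigma A$, so $\Sigma A$ is Hermitian, and by hypothesis it is positive definite. Thus $(\Sigma A,\Sigma)$ is a definite pencil. Once the simultaneous diagonalization is in hand, the theorem follows from a scaling/permutation argument together with Sylvester's law of inertia.

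\textbf{Main steps.} First I would invoke the classical result that, since $\Sigma A$ is Hermitian positive definite and $\Sigma$ is Hermitian, there exists a nonsingular $W\in\mathbb{K}^{n\times n}$ with
\begin{align*}
W^*(\Sigma A)W \;=\; I_n, \qquad W^*\Sigma W \;=\; D,
\end{align*}
where $D=\diag{d_1,\dots,d_n}$ is real and diagonal (the $d_i$ are precisely the eigenvalues of the pencil $(\Sigma,\Sigma A)$, equivalently of $A^{-1}$, and are therefore all real and nonzero). Next, by Sylvester's law of inertia applied to the congruence $W^*\Sigma W=D$, the signs of the $d_i$ match the inertia of $\Sigma$: exactly $p$ of them are positive and $n-p$ are negative. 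After a column permutation of $W$ we may assume $d_1,\dots,d_p>0$ and $d_{p+1},\dots,d_n<0$. Then I would rescale the columns: set
\begin{align*}
V \;:=\; W\,\diag{|d_1|^{-1/2},\dots,|d_n|^{-1/2}},
\end{align*}
so that $V^*\Sigma V = \diag{I_p,-I_{n-p}} =: J$ (this establishes \eqref{Eq:VSigmaOrth}) and $V^*(\Sigma A)V = \diag{|d_1|^{-1},\dots,|d_n|^{-1}}$.

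Finally I would verify the eigendecomposition. From $V^*\Sigma V=J$ and $J^2=I_n$ we obtain $V^{-1}=JV^*\Sigma$, and hence
\begin{align*}
V^{-1}AV \;=\; JV^*\Sigma AV \;=\; J\,\diag{|d_1|^{-1},\dots,|d_n|^{-1}} \;=\; \diag{1/d_1,\dots,1/d_n} \;=:\; \Lambda.
\end{align*}
Since $d_1,\dots,d_p>0$ and $d_{p+1},\dots,d_n<0$, the same holds for $\lambda_i=1/d_i$, which gives all the claimed statements: the eigenvalues are real and nonzero, $p$ positive and $n-p$ negative, and $V$ is $(\Sigma,J)$-orthonormal in the sense of \eqref{Eq:VSigmaOrth}.

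\textbf{Main obstacle.} There is no deep difficulty beyond correctly invoking simultaneous diagonalization of a Hermitian pencil with positive definite leading member; the only technical nuisance is bookkeeping — permuting the columns of $W$ to group the positive $d_i$'s first so that the signature lines up with $\diag{I_p,-I_{n-p}}$, and checking that the $|d_i|^{-1/2}$ scaling produces exactly \eqref{Eq:VSigmaOrth} rather than some other signature matrix. The inertia count itself rests on Sylvester's law, and the reality/nonzeroness of the eigenvalues is automatic from the fact that each $\lambda_i = 1/d_i$ with $d_i\ne 0$ real.
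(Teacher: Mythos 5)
Your proof is correct and follows essentially the same route as the paper: simultaneous diagonalization of the Hermitian definite pencil $(\Sigma A,\Sigma)$, Sylvester's law of inertia for the sign count, then a permutation and diagonal rescaling of the congruence transformation. Your explicit verification that $V^{-1}=JV^*\Sigma$ yields $V^{-1}AV=\diag{1/d_1,\dots,1/d_n}$ is a welcome addition, and your scaling by $|d_i|^{-1/2}$ is stated in the correct direction (the paper's $V:=X|\Lambda|^{-1/2}$ should read $X|\Lambda|^{1/2}$ given its normalization $X^*\Sigma X=\Lambda^{-1}$).
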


\begin{proof}
As $\Sigma A$ is positive definite, and $\Sigma$ is symmetric, they can be diagonalized simultaneously (see \cite{GolV13}, Corollary 8.7.2), i.e. there is a nonsingular $X\in\mathbb{C}^{n\times n}$ s.t.
$X^{\herm}\Sigma A X = I_n$, and $X^{\herm} \Sigma X = \Lambda^{-1} \in\mathbb{R}^{n\times n}$,
 where $\Lambda^{-1}=\diag{\lambda_1^{-1},\dots,\lambda_n^{-1}}$ gives the eigenvalues of the matrix pencil $\Sigma x - \lambda \Sigma A$.  It follows from  Sylvester's law of inertia that $\Lambda^{-1}$ has $p$ positive and $n-p$ negative values. We have
  $X^{-1}AX= \Lambda$,
 i.e. $A$ is diagonalizable and $\Lambda^{-1}$ contains the eigenvalues of $A$. 
The columns of $X$ can be arranged, such that the positive eigenvalues are given in the upper left part of $\Lambda$ and the negative ones are given in the lower right part. $X$ can be scaled in form of $V:=X{|\Lambda|}^{-\frac{1}{2}}$, where $|\cdot|$ denotes the entry-wise absolute value, such that \eqref{Eq:VSigmaOrth} holds.
\end{proof}

For pseudosymmetric matrices that are definite, the structure-preserving spectral divide-and-conquer algorithm (Algorithm \ref{Alg:PseuDivConq}) shows a special behaviour that can be exploited algorithmically. Generally, after one step of spectral division, we get two smaller matrices that are pseudosymmetric with respect to two submatrices of the original signature matrix $\Sigma$, denoted $\Sigma_+$ and $\Sigma_-$ in Algorithm \ref{Alg:PseuDivConq}. The $p$ positive and the $n-p$ negative values on the diagonal of $\Sigma$ split up in an unpredictable way. For definite matrices they split up neatly: The positive values gather in $\Sigma_+=I_p$ and the negative values gather in $\Sigma_- = -I_{n-p}$. After spectral division, the upper left block  $A_{11}$ is definite pseudosymmetric with respect to $I_p$, i.e.\ symmetric positive definite. The lower right block $A_{22}$ is definite pseudosymmetric with respect to $-I_{n-p}$, i.e.\ symmetric negative definite. This behavior is explained in the following theorem.

\begin{theorem}\label{Thm:DefDivConq}
Let $A\in\mathbb{K}^{n\times n}$ be a definite pseudosymmetric matrix with respect to $\Sigma$ with $p$ positive and $n-p$ negative diagonal values. Let $H$ be a basis of the invariant subspace of $A$ associated with the $p$ positive (respectively $n-p$ negative) eigenvalues, such that $H^*\Sigma H=\hat{\Sigma}$, where $\hat{\Sigma}$ is another signature matrix.  Then $H^{\dagger} A H$ is Hermitian positive (respectively negative) definite and $\hat{\Sigma}=I_p$ (respectively $\hat{\Sigma}=-I_{n-p}$).    
\end{theorem}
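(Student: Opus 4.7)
The plan is to exploit the diagonalization provided by Theorem~\ref{Thm:EVREalSigmaOrth} and then show that the hypothesis ``$\hat{\Sigma}$ is a signature matrix'' is so restrictive that it forces $\hat{\Sigma}$ to equal $I_p$ (respectively $-I_{n-p}$), after which positive (respectively negative) definiteness of $H^{\dagger}AH$ falls out by direct computation.

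First I would invoke Theorem~\ref{Thm:EVREalSigmaOrth} to obtain an eigendecomposition $AV = V\Lambda$ with $V = [V_+, V_-]$, where the columns of $V_+ \in \mathbb{K}^{n \times p}$ span the invariant subspace for the positive eigenvalues $\Lambda_+ = \diag{\lambda_1,\dots,\lambda_p}$ and the columns of $V_-$ span the invariant subspace for the negative eigenvalues $\Lambda_-$. The normalization $V^*\Sigma V = \diag{I_p, -I_{n-p}}$ then yields the block identities $V_+^*\Sigma V_+ = I_p$, $V_-^*\Sigma V_- = -I_{n-p}$, and $V_+^*\Sigma V_- = 0$.

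In the positive case, since $H$ and $V_+$ both form bases of the same $p$-dimensional invariant subspace, there is a nonsingular $T \in \mathbb{K}^{p \times p}$ with $H = V_+ T$. Substituting gives
\begin{equation*}
  \hat{\Sigma} = H^*\Sigma H = T^* V_+^*\Sigma V_+ T = T^*T.
\end{equation*}
This is the crux: $\hat{\Sigma}$ is simultaneously a signature matrix (diagonal with $\pm 1$ entries) and a Hermitian positive definite matrix (since $T$ is nonsingular). The only such matrix is $\hat{\Sigma} = I_p$, which forces $T$ to be unitary. Using $H^{\dagger} = \hat{\Sigma} H^*\Sigma = H^*\Sigma$ and $AH = V_+\Lambda_+ T$, I would then compute
\begin{equation*}
  H^{\dagger}AH = H^*\Sigma A V_+ T = T^*V_+^*\Sigma V_+ \Lambda_+ T = T^*\Lambda_+ T,
\end{equation*}
which is Hermitian positive definite because $\Lambda_+$ has strictly positive diagonal entries and $T$ is invertible.

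The negative case is handled by the same argument with $H = V_- T$: one computes $\hat{\Sigma} = H^*\Sigma H = -T^*T$, which is a negative definite signature matrix and hence must equal $-I_{n-p}$, giving $T^*T = I_{n-p}$. Then $H^{\dagger} = -H^*\Sigma$, and a parallel calculation yields $H^{\dagger}AH = T^*\Lambda_- T$, Hermitian negative definite. I expect the only subtle step to be the sign-matrix rigidity argument in the paragraph above; everything else is just bookkeeping with the identities from Theorem~\ref{Thm:EVREalSigmaOrth} and the formula $H^{\dagger} = \hat{\Sigma}H^*\Sigma$ from \eqref{Eq:pseudoinverse}.
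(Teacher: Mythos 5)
Your proposal is correct and follows essentially the same route as the paper: both use the normalized eigendecomposition from Theorem~\ref{Thm:EVREalSigmaOrth}, write $H=V_+T$ (the paper calls the factor $X$), deduce $\hat{\Sigma}=T^*T$ must be $I_p$ since it is a positive definite signature matrix, and then conclude definiteness of $H^{\dagger}AH$. The only cosmetic difference is at the last step, where the paper observes $H^{\dagger}AH=H^*(\Sigma A)H$ is a congruence of the positive definite matrix $\Sigma A$, while you expand further to $T^*\Lambda_+T$ --- both are valid and equivalent in substance.
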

\begin{proof}
  Let $AV=V\Lambda$ be the eigenvalue decomposition given in Theorem \ref{Thm:EVREalSigmaOrth}. Let $V_p=\begin{bmatrix}v_1&\dots v_p\end{bmatrix}$ denote the first $p$ columns of $V$, associated with the positive eigenvalues $\Lambda_+ = \diag{\lambda_1,\dots,\lambda_p}$. Then $AV_+=V_+\Lambda_+$ and 
\begin{align}\label{Eq:VpSigmaVp}
V_+^*\Sigma V_+=I_p.
\end{align}

As $H$ spans the same subspace as $V_+$, there must be $X\in\mathbb{K}^{p\times p}$ such that $H=V_+X$. Then $H^*\Sigma H = X^*V_+^*\Sigma V_+ X=X^*X$ is positive definite.  The only signature matrix with this property is the identity, showing $\hat{\Sigma}=I_p$.

Then it holds $H^\dagger = H^*\Sigma$ and therefore
\begin{align*}
 H^\dagger A H = H^*\Sigma A H
\end{align*}
is Hermitian positive definite, as  $\Sigma A$ is Hermitian positive definite.
Concerning the negative eigenvalues it can be shown that $\hat{\Sigma}=-I_{n-p}$ and therefore 
\begin{align*}
 H^\dagger A H = -H^*\Sigma A H
\end{align*}
is Hermitian negative definite.
\end{proof}

Theorem \ref{Thm:DefDivConq} greatly simplifies the divide-and-conquer method for definite pseudosymmetric matrices (Algorithm \ref{Alg:PseuDivConq}). We only need one spectral division step and can then fall back on existing algorithms for symmetric positive definite matrices. They can be of the divide-and-conquer variety, e.g.\ developed in \cite{NakH13}, but do not have to be. In a high-performance setting, parallelized algorithms implemented in libraries such as ELPA \cite{MarBJetal14} can be used. 

\subsection{Computing $(\Sigma,\hat{\Sigma})$-orthogonal representations}
\label{Subsec:DefPseudoorthRep}

The computation of pseudoorthogonal subspace representations described in Section \ref{Sec:HypBasis} also simplifies. In Step \ref{Alg:HypAlg:HypSubspaceLDL:Trunc1} of Algorithm \ref{Alg:HypSubspaceLDL}, the smaller signature matrix $\Sigma_+$ related to the subspace associated with positive eigenvalues is computed by taking the signs of the diagonal matrix $D$ of the previously computed $LDL^{\tran}$ decomposition. Because of Theorem \ref{Thm:DefDivConq} we know that $\Sigma_+=I_p$. The $LDL^{\tran}$ decomposition was taken of $\Sigma P_+$, which hence must be positive semidefinite. Therefore, the $LDL^{\tran}$ decomposition can be substituted by a low-rank Cholesky factorization, similar to the symmetric case described in Algorithm \ref{Alg:OrthSubspaceChol}.  The computation of the rank (Step \ref{Alg:HypSubspaceLDL:Rank} in Algorithm \ref{Alg:HypSubspaceLDL}) is omitted because we know that $A$ has as many positive eigenvalues as $\Sigma$ has positive diagonal values according to Theorem \ref{Thm:EVREalSigmaOrth}.

\begin{algorithm}
\caption{Compute $(\Sigma,\hat{\Sigma})$-orthogonal invariant subspace representations of a definite pseudosymmetric projection matrix via Cholesky} \label{Alg:HypSubspaceChol}
 \begin{algorithmic}[1]
  \Require Signature matrix $\Sigma$ with $r_+$ positive and $r_-$ negative diagonal values, $A\in\mathbb{R}^{n}$, such that $\Sigma A$ is symmetric positive definite.
  \Ensure A$(\Sigma,\hat{\Sigma})$-orthogonal basis $Q=\begin{bmatrix}
                                  Q_+ & Q_-
                                 \end{bmatrix}$, where $\hat{\Sigma}=\diag{I_{r_+},-I_{r_-}}$, i.e.\ $Q^{\tran}\Sigma Q= \hat{\Sigma}$, where $Q_+$ is a basis of the invariant subspace of $A$ associated with positive eigenvalues, $Q_-$ is a basis of the invariant subspace of $A$ associated with negative eigenvalues.  
 \State $S\leftarrow \sign{A}$.
 \State $P_+ \leftarrow \frac{1}{2} (I_n+S)$.
 \State $Q_{1,+}\leftarrow \texttt{chol}(\Sigma(1:r_+,1:r_+) P_+(1:r_+,1:r_+))$.
  \State $Q_+\leftarrow\begin{bmatrix}
            \Sigma(1:r_+,1:r_+)Q_{1,+}^{\herm}\\
            P_+(r_++1:n,1:r_+)Q_{1,+}^{-1}
           \end{bmatrix}$.         
  \State $P_- \leftarrow \frac{1}{2} (I_n-S)$.
 \State $Q_{1,-}\leftarrow \texttt{chol}(-\Sigma(1:r_-,1:r_-) P_-(1:r_-,1:r_-))$.
  \State $Q_-\leftarrow\begin{bmatrix}
            \Sigma(1:r_-,1:r_-)Q_{1,-}^{\herm}\\
            P_-(r_-+1:n,1:r_-)Q_{1,-}^{-1}
           \end{bmatrix}$.
 \end{algorithmic}
\end{algorithm}

Numerical experiments (in particular examples from electronic structure theory, presented in Section \ref{Subsec:Appl}) show that Algorithm \ref{Alg:HypSubspaceChol} can break down due to numerical errors in floating point arithmetic. This happens when numerical errors lead to $\Sigma P_+$ having negative eigenvalues or $\Sigma P_-$ having positive eigenvalues, such that the Cholesky decomposition breaks down. In order to avoid this case, we implement a more robust variant based on a truncated $LDL^{\tran}$ decompositions, which includes pivoting, given in Algorithm \ref{Alg:HypSubspaceDefLDL}.

\begin{algorithm}[t]
\caption{Robust computation of $(\Sigma,\hat{\Sigma})$-orthogonal invariant subspace representations of a definite pseudosymmetric projection matrix via $LDL^{\tran}$} \label{Alg:HypSubspaceDefLDL}
 \begin{algorithmic}[1]
  \Require Signature matrix $\Sigma$ with $r_+$ positive and $r_-$ negative diagonal values, $A\in\mathbb{R}^{n}$, such that $\Sigma A$ is symmetric positive definite.
    \Ensure A$(\Sigma,\hat{\Sigma})$-orthogonal basis $Q=\begin{bmatrix}
                                  Q_+ & Q_-
                                 \end{bmatrix}$, where $\hat{\Sigma}=\diag{I_{r_+},-I_{r_-}}$, i.e.\ $Q^{\tran}\Sigma Q= \hat{\Sigma}$, where $Q_+$ is a basis of the invariant subspace of $A$ associated with positive eigenvalues, $Q_-$ is a basis of the invariant subspace of $A$ associated with negative eigenvalues.  
 \State $S\leftarrow \sign{A}$.
 \State $P_+ \leftarrow \frac{1}{2} (I_n+S)$.
\State $[L_+,D_+]\leftarrow \texttt{ldl}(\Sigma P_+)$.
 \State Diagonalize $D_+$ if it has blocks on the diagonal: $[V_+,D_+]\leftarrow\texttt{eig}(D_+)$, such that the diagonal entries of $D_+$ are given in descending order. 
 \State $Q_+\leftarrow \Sigma L_+ V_+(:,1:r_+)D_+^{\frac{1}{2}}(1:r_+,1:r_+)$.
  \State $P_- \leftarrow \frac{1}{2} (I_n-S)$.
\State $[L_-,D_-]\leftarrow \texttt{ldl}(-\Sigma P_-)$.
 \State Diagonalize $D_-$ if it has blocks on the diagonal: $[V_-,D_-]\leftarrow\texttt{eig}(D_-)$, such that the diagonal entries of $D_-$ are given in descending order. 
 \State $Q_-\leftarrow \Sigma L_- V_-(:,1:r_-)D_-^{\frac{1}{2}}(1:r_-,1:r_-)$.
 \end{algorithmic}
\end{algorithm}

\subsection{Using Zolotarev functions to accelerate the matrix sign iteration}
\label{Subsec:DefZolo}
It was observed in \cite{BenNP22} that the matrix sign function of a self-adjoint matrix $A$ is given as the first factor of its generalized polar decomposition, offering a new perspective for its computation.  A matrix $A\in\mathbb{K}^{n\times n}$ (under certain assumptions, see \cite{HigMMetal05}) admits a generalized polar decomposition with respect to a scalar product induced by a nonsingular matrix $M$
\begin{align*}
A=WS,
\end{align*}
where $W$ is a partial $M$-isometry and $S$ is $M$-self-adjoint with no eigenvalues on the negative real axis. Canonical generalized polar decompositions can be defined for rectangular matrices \cite{HigMT10}. We only consider the square case relevant to the application discussed in this work. 

Iterations of a certain form that compute the generalized polar decomposition $A=WS$ work as a scalar iteration on the eigenvalues of $S$, pushing them closer to $1$ in the course of the iteration. The following lemma clarifies this idea and is a slightly altered variant of Theorem 5.2 in \cite{BenNP22}. 
\begin{lemma}\label{Lem:IterOnSelfAdj}
 Let $g$ be a scalar function of the form
 \begin{align} \label{Eq:IterOnSelfAdj:g}
  g(x) = x h(x^2),
 \end{align}
where $h$ is an arbitrary scalar function. Let $A\in\mathbb{R}^{n\times n}$ be a matrix with a generalized polar decomposition $A=WS$ for a given scalar product induced by $M\in\mathbb{R}^{n\times n}$. Let
\begin{align}\label{Eq:IterOnSelfAdj:G}
 G(X) := Xh(X^{\star_M}X)
\end{align}
be a matrix function. Then it holds
\begin{align*}
 G(A) = WG(S) = Wg(S).
\end{align*}
\end{lemma}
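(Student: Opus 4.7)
The plan is to reduce everything to a direct computation that lets the $W$ factor pass through the matrix function. The key observation will be that $A^{\star_M}A$ simplifies to $S^2$, so that $h(A^{\star_M}A)=h(S^2)$, and then $G(A)=A\,h(A^{\star_M}A)=WS\,h(S^2)$ rearranges to $W g(S)$ by the definition $g(x)=xh(x^2)$.

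First I would compute the $M$-adjoint of $A$ using the decomposition $A=WS$ together with the identity $(XY)^{\star_M}=Y^{\star_M}X^{\star_M}$ (which follows directly from \eqref{Eq:StarM}). Since $S$ is $M$-self-adjoint, $S^{\star_M}=S$, and hence $A^{\star_M}=S\,W^{\star_M}$. Multiplying, $A^{\star_M}A=SW^{\star_M}WS$. At this step I would use the defining property of a partial $M$-isometry, namely that $W^{\star_M}W$ acts as the identity on the range of $S$ in the canonical generalized polar decomposition (in the square case treated here, $W^{\star_M}W=I$). This yields $A^{\star_M}A=S^2$.

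Second, I would apply the scalar function $h$ to both sides. Because $h$ is defined via the standard matrix function framework (analytic on the spectrum, or through the Jordan form), $h(A^{\star_M}A)=h(S^2)$. Substituting into \eqref{Eq:IterOnSelfAdj:G} gives
\begin{align*}
 G(A) = A\,h(A^{\star_M}A) = WS\,h(S^2) = W\bigl(S\,h(S^2)\bigr) = W g(S),
\end{align*}
using the form \eqref{Eq:IterOnSelfAdj:g} in the last step. For the equality $W g(S)=W G(S)$, I would apply the same computation with $A$ replaced by $S$: since $S^{\star_M}=S$, we have $S^{\star_M}S=S^2$, so $G(S)=S\,h(S^2)=g(S)$.

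The main obstacle I anticipate is the handling of the partial $M$-isometry condition: strictly speaking, $W^{\star_M}W$ is an $M$-orthogonal projector rather than necessarily the identity, so one must argue that this projector acts trivially when multiplied against $S$ (equivalently, that $\operatorname{range}(S)$ lies in the range on which $W^{\star_M}W$ is the identity). In the canonical decomposition described in \cite{HigMMetal05,HigMT10} this is built into the construction, and since the lemma is stated for the square case relevant to the applications here, it is safe to invoke $W^{\star_M}W S=S$; the rest of the argument is a straightforward substitution.
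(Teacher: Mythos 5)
Your proposal is correct and follows essentially the same route as the paper: both reduce $A^{\star_M}A=S^{\star_M}W^{\star_M}WS$ to $S^2$ via the key identity $W^{\star_M}WS=S$ (Lemma 3.7 in \cite{HigMT10}) together with the $M$-self-adjointness of $S$, and then read off $G(A)=WSh(S^2)=Wg(S)=WG(S)$. Your explicit remark that $W^{\star_M}W$ is only an $M$-orthogonal projector acting as the identity on $\operatorname{range}(S)$ is exactly the point the paper delegates to that cited lemma, so there is no gap.
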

\begin{proof}
Observe
 \begin{align*}
  G(A)=G(WS)=WSh(S^{\star_M}W^{\star_M}WS) = WSh(S^{\star_M}S) = WG(S)=Wg(S).
 \end{align*}
 We used $W^{\star_M}WS=S$, which holds according to Lemma 3.7. in \cite{HigMT10}. The last equality holds because $S$ is self-adjoint and $S^{\star_M}S = S^2$.

\end{proof}

Given an iteration of the form
\begin{align}\label{Eq:GenIter}
 X_{k+1} = G(X_k), 
\end{align}
with $G$ from \eqref{Eq:IterOnSelfAdj:G}, Lemma \ref{Lem:IterOnSelfAdj} states that it acts as the function $g$ from \eqref{Eq:IterOnSelfAdj:g} on the eigenvalues of the self-adjoint factor $S$. With the Jordan decomposition $S=ZJZ^{-1}$, $J=\diag{J_k}$ we see
\begin{align}\label{Eq:WorkOnJblocks}
 X_{k+1} = WZ\diag{g(J_k)}Z^{-1}.
\end{align}

We have already seen in Theorem \ref{Thm:DefDivConq} that definite pseudosymmetric matrices have a special spectral structure. The following lemma shows that as a consequence, the eigenvalues of the self-adjoint factor $S$, on which iterations of the form \eqref{Eq:GenIter} act, are real.


\begin{lemma}\label{Lem:RationalIter}
 Let $A\in\mathbb{K}^{n\times n}$ be a definite pseudosymmetric matrix with respect to $\Sigma$. Then the generalized polar decomposition of $A$ with respect to $\Sigma$,
 \begin{align*}
  A=WS,
 \end{align*}
 exists. The eigenvalues of $S$ are positive real and the absolute values of the eigenvalues of $A$.
\end{lemma}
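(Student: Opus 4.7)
The plan is to reduce everything to properties of $A^{2}$ using the fact that a $\Sigma$-self-adjoint matrix equals its own $\Sigma$-adjoint, and then invoke the principal square root.

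First, I would use Theorem \ref{Thm:EVREalSigmaOrth} to record that all eigenvalues of $A$ are real and nonzero, hence the eigenvalues of $A^{2}$ are strictly positive real numbers (equal to $\lambda_i^{2}$ where $\lambda_i$ are the eigenvalues of $A$). The key algebraic observation is that because $A$ is $\Sigma$-self-adjoint we have $A^{\star_\Sigma}=A$, and therefore $A^{\star_\Sigma}A=A^{2}$. This is the ``mass matrix'' that governs the self-adjoint factor in the generalized polar decomposition: for any candidate decomposition $A=WS$ with $W$ a $\Sigma$-isometry and $S$ $\Sigma$-self-adjoint, one gets $A^{\star_\Sigma}A=S^{\star_\Sigma}W^{\star_\Sigma}WS=S^{2}$, so necessarily $S^{2}=A^{2}$.

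Next I would establish existence. The standard existence criterion from \cite{HigMMetal05} for the generalized polar decomposition with respect to the scalar product induced by $\Sigma$ requires $A^{\star_\Sigma}A$ to have no eigenvalues on the closed negative real axis; as just noted, $A^{\star_\Sigma}A=A^{2}$ has only strictly positive real eigenvalues, so the decomposition exists. I would then \emph{construct} $S$ explicitly as the principal square root $S:=(A^{2})^{1/2}$ (defined via the matrix function, which is well-defined since $A^{2}$ has no eigenvalues on $(-\infty,0]$) and set $W:=AS^{-1}$. A short check shows that $S$ is $\Sigma$-self-adjoint (matrix functions preserve $\Sigma$-self-adjointness, since $(A^{2})^{\star_\Sigma}=(A^{\star_\Sigma})^{2}=A^{2}$) and that $W^{\star_\Sigma}W=S^{-1}A^{\star_\Sigma}AS^{-1}=S^{-1}A^{2}S^{-1}=I$, so $W$ is in fact a full $\Sigma$-isometry and $A=WS$ is the desired decomposition.

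Finally I would read off the spectrum of $S$. Because $S$ is the principal square root of $A^{2}$, its eigenvalues are the principal square roots of the eigenvalues $\lambda_i^{2}$ of $A^{2}$, i.e.\ $|\lambda_i|>0$. In particular they are positive real and coincide with the absolute values of the eigenvalues of $A$, which is exactly the claim. The only ``obstacle'' worth flagging is making sure the existence hypotheses of the generalized polar decomposition from \cite{HigMMetal05} are genuinely met; the definite pseudosymmetry assumption (via Theorem \ref{Thm:EVREalSigmaOrth}) resolves this cleanly by forcing $A^{\star_\Sigma}A=A^{2}$ to be spectrally positive.
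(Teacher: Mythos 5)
Your proposal is correct and follows essentially the same route as the paper: both arguments hinge on the identity $A^{\star_\Sigma}A=\Sigma A^*\Sigma A=A^2$, use Theorem \ref{Thm:EVREalSigmaOrth} to conclude that $A^2$ has only positive real eigenvalues (hence the generalized polar decomposition exists), and identify $S=(A^{\star_\Sigma}A)^{1/2}=(A^2)^{1/2}$ so that its eigenvalues are the $|\lambda_i|$. Your additional explicit verification that $W=AS^{-1}$ is a $\Sigma$-isometry is a fine elaboration of what the paper leaves implicit, but it is not a different method.
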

\begin{proof}
 For pseudosymmetric matrices it holds $A^{\star_\Sigma}A=\Sigma A^* \Sigma A = A^2$. As $A$ has only real nonzero eigenvalues (following from Theorem \ref{Thm:EVREalSigmaOrth}), $A^2$ has only real positive eigenvalues. Hence the generalized polar decomposition exists. The self-adjoint factor of the polar decomposition is defined as $S=(A^{\star_\Sigma}A)^{\frac{1}{2}}$ and has only real eigenvalues, as the square roots of real positive values are real. They are the absolute values of the eigenvalues of A.
\end{proof}
Let $A$ be scaled, such that its eigenvalues lie in $[-1,1]$, and let $0<\ell<|\lambda|$ for all $\lambda\in\Lambda(A)$. Then the eigenvalues of $S$ lie in $(\ell,1]$. A rational function $g(x)=xh(x^2)$ which maps them close to $1$, i.e.\ approximates the scalar sign function on the interval $(\ell,1]$, can be used in an iteration \eqref{Eq:GenIter}. We see from \eqref{Eq:WorkOnJblocks} that the result will be an approximation to the polar factor $W$, which in our setting coincides with the matrix sign function. Luckily, explicit formulas for rational best-approximations of the sign function with form \eqref{Eq:IterOnSelfAdj:g} were found by Zolotarev in 1877 \cite{Zol77}. In \cite{NakF16}, Zolotarev functions are used to devise an iteration which computes the polar decomposition in just two steps. The algorithmic cost of the steps is increased compared to other iterative techniques, but the additional computations can be performed completely in parallel.  We extend this approach for computing the polar decomposition of definite pseudosymmetric matrices.

We call the unique rational function of degree $(2r+1,2r)$ solving
\begin{align*}
 \min_{R\in\mathcal{R}_{2r+1,2r}} 
 \max_{x\in[-1,-\ell]\cup[\ell,1]} 
 |\sign{x}-R(x)|
\end{align*}
for a given $0<\ell<1$ and an integer $r$, the \emph{type $(2r+1,2r)$ Zolotarev function}. It is given explicitly in the form of
\begin{align}\label{Eq:Zolo}
 Z_{2r+1}(x;\ell):=Cx\prod_{j=1}^r
 \frac{x^2+c_{2j}}{x^2+c_{2j-1}}.
\end{align}
The coefficients $c_1,\dots, c_{2r}$ are determined via the Jacobi elliptic functions $\text{sn}(u;\ell)$ and $\text{cn}(u;\ell)$ as
\begin{align}\label{Eq:Compcis}
 c_i = \ell^2 \frac{\text{sn}^2(\frac{\iu K'}{2r+1};\ell')}{\text{cn}^2(\frac{\iu K'}{2r+1};\ell')},\qquad i=1,\dots 2r,
\end{align}
where $\ell'=\sqrt{1-\ell^2}$ and $K'=\int_0^{\pi/2}(1-(\ell')^2\sin^2(\theta))^{-1/2}d\theta$ are familiar quantities in the context of Jacobi elliptic functions (see e.g.\ \cite[Chapter 17]{AbrS72}, , \cite[Chapter 5]{Akh90}). Details on the stable computation of the coefficients can be found in \cite{NakF16}. The constant $C>0$ is uniquely determined, which will later be substituted by a normalization constant $\hat{C}$. We use implementations provided as MATLAB functions in \cite{NakF16} for their computation.

Zolotarev also showed (see \cite[Chapter 9]{Akh92}, \cite[Chapter 4]{PetP87}) that $Z_{2r+1}(x;\ell)$ solves 
\begin{align*}
 \max_{P,Q\in\mathcal{P}_r} \min_{\ell\leq x \leq 1} x{ \frac{P(x^2)}{Q(x^2)}}.
\end{align*}
For $r=1$, this optimization problem was solved in \cite{NakBG10}, leading to the dynamically weighted Halley (DWH) iteration. This iteration was used in \cite{BenNP22} to compute the generalized polar decomposition. An iteration based on Zolotarev functions therefore generalizes the DWH approach in terms of higher-degree rational functions. 

A key oberservation in \cite{NakF16} is that the composition of Zolotarev functions is again a Zolotarev function. More precisely, it holds
\begin{align*} 
 \hat{Z}_{2r+1}(\hat{Z}_{2r+1}(x;\ell);\ell_1) = \hat{Z}_{(2r+1)^2}(x;\ell),
\end{align*}
where 
\begin{align}\label{Eq:hatZ}
\hat{Z}_{2r+1}(x;\ell)= \frac{Z_{2r+1}(x;\ell)}{Z_{2r+1}(1;\ell)} = \hat{C}x\prod_{j=1}^r \frac{x^2+c_{2j}}{x^2+c_{2j-1}},\text{ with }\hat{C}=\prod_{j=1}^r\frac{1+c_{2j-1}}{1+c_{2j}},
\end{align}
is a scaled Zolotarev function and $\ell_1=\hat{Z}_{2r+1}(\ell;\ell)$. It can be verified that with $r:=8$, $\ell\geq10^{-16}$, we have  $Z_{(2r+1)^2}([\ell,1],\ell]) \subseteq [1-10^{-15},1]$. Consequently, employing Lemma \ref{Lem:RationalIter} twice on a matrix $A=WS$ with $g(x)=\hat{Z}_{2r+1}(x;\ell)$, we see that the eigenvalues of $g(g(S))$ will be in the interval $[1-10^{-15},1]$, under the condition that all eigenvalues of $S$ are 
in $[\ell,1]$ with $\ell\geq 10^{-16}$.
In this sense, $G(G(A))\approx W$ has converged to the polar factor $W$, after two iterations of Iteration \eqref{Eq:GenIter}. Choosing a higher $r$, algorithms can be devised that converge in just one step. It was argued in \cite{NakF16} that a 2-step approach is a sensible choice to acquire a robust algorithm. This way, potential instabilities, e.g.\ in the computation of the Zolotarev coefficients $c_i$, are suppressed.

The scaled Zolotarev function can be represented in a partial fraction decomposition
\begin{align}\label{Eq:ZPartialFrac}
 &\hat{Z}_{2r+1}(x;\ell) = \hat{C}x\left(1+\sum_{j=1}^r\frac{a_j}{x^2+c_{2j-1}}\right),\\
 &a_j=-\left(\prod_{k=1}^r(c_{2j-1}-c_{2k})\right)\cdot\left(\prod_{k=1,k\neq j}^r(c_{2j-1}-c_{2k-1})\right).\label{Eq:ZPartialFrac2}
\end{align}
An iteration \eqref{Eq:GenIter} derived from \eqref{Eq:ZPartialFrac} takes the form
\begin{align}\label{Eq:ZoloIt1}
 X_{k+1}  = \hat{C}(X_k+\sum_{j=1}^r a_{j,k} X_k(X_k^{\star_M}X_k+c_{2j-1,k}I)^{-1}).
\end{align}
With $M=\Sigma$ as a signature matrix, \eqref{Eq:ZoloIt1} becomes
\begin{align}\label{Eq:ZoloIt2}
 X_{k+1} = \hat{C}(X_k+\sum_{j=1}^r a_{j,k} X_k(X_k^{*}\Sigma X+c_{2j-1,k}\Sigma)^{-1}\Sigma).
\end{align}
Computing the inverse via an $LDL^{\tran}$ decomposition leads to a first practical iteration.
\begin{align}\label{Eq:LDLZolo}
\left\{
                \begin{array}{ll}
 Z_{2j-1,k}=(X_k^*\Sigma X_k+c_{2j-1,k}\Sigma),
 \quad [L_j,D_j,P_j]=\text{ldl}(Z_{2j-1,k}),\\ \\
 X_{k+1}=\hat{C}(X_k+\sum_{j=1}^r a_j X_k P_jL_j^{-*}D_j^{-1}L_j^{-1} P_j^{\tran} \Sigma)
 \end{array}\right.
\end{align}
The first line of \eqref{Eq:LDLZolo} means that in iteration $k$, the $LDL^{\tran}$ decomposition $Z_{2j-1,k}=P_jL_jD_jL_j^*P_j^{\tran}$ is computed for each $Z_{2j-1,k}$, $j=1,\dots,r$. $P_j$ is a permutation matrix, $L_j$ is lower triangular, and $D_j$ is block-diagonal with $1\times 1$ or $2\times 2$ blocks. In \cite{BenNP22}, the special case for $r=1$ is derived. There, the iteration is rewritten such that it becomes inverse-free and a hyperbolic QR decomposition is employed instead. A special case of Theorem 5.3, in conjunction with Lemma 5.4  in \cite{BenNP22} is given in the following lemma and can be used to rewrite \eqref{Eq:ZoloIt2}.
\begin{lemma}\label{Lem:HRnoInv}
 Let $\Sigma$ be a signature matrix, $\eta\in\mathbb{R}$. For $X\in\mathbb{K}^{n\times n}$, let $\begin{bmatrix}
                                                               \eta X\\
                                                               I
                                                              \end{bmatrix}=HR$, $H=\begin{bmatrix}H_1\\H_2\end{bmatrix}\in\mathbb{K}^{2n\times n}$, $R\in\mathbb{K}^{n\times n}$ be a decomposition, such that
                                                              $H^*\begin{bmatrix}
                                                                   \Sigma&\\
                                                                   &\Sigma
                                                                  \end{bmatrix}H=\hat{\Sigma}$, where $\hat{\Sigma}\in\mathbb{R}^{n\times n}$ is another signature matrix. Then
 \begin{align*}
  \eta X(I+\eta^2 X^{\star_{\Sigma}}X)^{-1}  =  H_1\hat{\Sigma}H_2^*\Sigma.
 \end{align*}
\end{lemma}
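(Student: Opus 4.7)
The plan is to reduce everything to the two block equations extracted from the decomposition and then manipulate the pseudo-orthogonality relation for $H$. Specifically, $HR = \begin{bmatrix} \eta X \\ I\end{bmatrix}$ immediately gives the two identities $H_1 R = \eta X$ and $H_2 R = I$. Since $R$ and $H_2$ are square $n\times n$, the second identity says $R = H_2^{-1}$, so in particular $H_2$ is invertible and $\eta X = H_1 H_2^{-1}$. This is the algebraic starting point.

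Next I would unpack the $(\operatorname{diag}(\Sigma,\Sigma),\hat{\Sigma})$-orthogonality of $H$. Expanding block-wise, $H^* \operatorname{diag}(\Sigma,\Sigma) H = \hat{\Sigma}$ gives
\begin{equation*}
H_1^* \Sigma H_1 + H_2^* \Sigma H_2 = \hat{\Sigma}.
\end{equation*}
Conjugating by $H_2^{-1}$, i.e.\ multiplying on the left by $H_2^{-*}$ and on the right by $H_2^{-1}$, and using $H_1 H_2^{-1} = \eta X$, one obtains
\begin{equation*}
\eta^2 X^* \Sigma X + \Sigma = H_2^{-*} \hat{\Sigma} H_2^{-1}.
\end{equation*}
Left-multiplying by $\Sigma$ (and using $\Sigma^2 = I$) converts the left side into $I + \eta^2 X^{\star_\Sigma} X$ via the identity $X^{\star_\Sigma} = \Sigma X^* \Sigma$, so
\begin{equation*}
I + \eta^2 X^{\star_\Sigma} X = \Sigma H_2^{-*} \hat{\Sigma} H_2^{-1}.
\end{equation*}

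Inverting this relation using $\hat{\Sigma}^2 = I$ and $\Sigma^2 = I$ yields
\begin{equation*}
(I + \eta^2 X^{\star_\Sigma} X)^{-1} = H_2\, \hat{\Sigma}\, H_2^* \Sigma.
\end{equation*}
Multiplying on the left by $\eta X = H_1 H_2^{-1}$ collapses the $H_2$ factors: $\eta X \cdot H_2 = H_1$, giving the claimed identity $\eta X(I + \eta^2 X^{\star_\Sigma} X)^{-1} = H_1 \hat{\Sigma} H_2^* \Sigma$. The only mild subtlety is justifying the invertibility of $H_2$ (and hence of $R$) used in the first step; this is automatic from $H_2 R = I$ with both factors square, so no genuine obstacle arises, and the rest is a clean string of block-matrix manipulations built from the two decomposition equations together with the pseudo-orthogonality relation.
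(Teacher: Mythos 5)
Your proof is correct. The chain of manipulations is sound: $H_2R=I$ with both factors square $n\times n$ forces $R=H_2^{-1}$ (a one-sided inverse of a square matrix is two-sided), hence $\eta X=H_1H_2^{-1}$; the block expansion $H_1^*\Sigma H_1+H_2^*\Sigma H_2=\hat{\Sigma}$ conjugated by $H_2^{-1}$ gives $\eta^2X^*\Sigma X+\Sigma=H_2^{-*}\hat{\Sigma}H_2^{-1}$; left-multiplying by $\Sigma$ and using $X^{\star_\Sigma}=\Sigma X^*\Sigma$ together with $\Sigma^2=I$ yields $I+\eta^2X^{\star_\Sigma}X=\Sigma H_2^{-*}\hat{\Sigma}H_2^{-1}$, which is manifestly invertible (so the inverse in the claim exists), and inverting with $\hat{\Sigma}^2=I$ gives $(I+\eta^2X^{\star_\Sigma}X)^{-1}=H_2\hat{\Sigma}H_2^*\Sigma$; the final left-multiplication by $\eta X=H_1H_2^{-1}$ collapses to the stated identity. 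Note that the paper itself does not prove this lemma: it is presented as a special case of Theorem~5.3 combined with Lemma~5.4 of the cited reference \cite{BenNP22}, where the general statement is established for the iteration function of the generalized polar decomposition. Your argument is therefore a genuinely different (and more elementary) route: a direct, self-contained block-matrix computation that avoids invoking the machinery of that reference. What the citation-based route buys is generality (the result there covers the broader family of iterations and rectangular settings); what your route buys is transparency and independence from external results, at the cost of being tailored to this specific square, $\eta$-scaled instance.
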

Using Lemma \ref{Lem:HRnoInv} with $\eta=\frac{1}{\sqrt{c_{2j-1,k}}}$, \eqref{Eq:ZoloIt2} can be rewritten as

\begin{align}\label{Eq:HRZolo}
 \left\{
                \begin{array}{ll}
                  \begin{bmatrix}
                    X_k \\\sqrt{c_{2j-1,k}}I
                  \end{bmatrix} = \begin{bmatrix}H_{1,j}\\H_{2,j}\end{bmatrix}R_j,\text{ where } \begin{bmatrix}H_{1,j}\\H_{2,j}\end{bmatrix}^*\begin{bmatrix}\Sigma&\\&\Sigma\end{bmatrix}\begin{bmatrix}H_{1,j}\\H_{2,j}\end{bmatrix} = \hat{\Sigma}
\\ \\
                  X_{k+1} = \hat{C}(X_k+\sum_{j=1}^r\frac{a_j}{\sqrt{c_{2j-1}}}H_{1,j}\hat{\Sigma}H_{2,j}^*\Sigma).
                \end{array}
              \right.
\end{align}
As in iteration \eqref{Eq:LDLZolo}, the first line refers to the computation of a total of $r$ independent decompositions  $\begin{bmatrix}
                   X_k \\ \sqrt{c_{2j-1,k}}I
                  \end{bmatrix}=H_jR_j$ for $j=1, \dots, r$, per iteration step. One way of computing the needed matrix $H$ is the hyperbolic QR decomposition, which we introduced in Theorem \ref{Thm:HypQR}. Computing it via a column-elimination approach is notoriously unstable.  This is why \cite{BenNP22, BenP21a} exploit a link to the $LDL^{\tran}$ factorization and introduces the \texttt{LDLIQR2} algorithm.  
                  
                  Algorithm \ref{Alg:ZoloPD} is the pseudocode of a Zolotarev-based computation of the generalized polar factor. We assume that convergence is reached after just two steps, which are explicitly written in the Algorithm. For the computation of iterate $X_1$, iteration \eqref{Eq:HRZolo} is employed. For the computation of the $H$ matrices we use the \texttt{LDLIQR2} algorithm from \cite{BenNP22}, which showed a better numerical stability than column-elimination based approaches. The second iterate $X_2$ can safely be computed using the $LDL^{\tran}$-based iteration \eqref{Eq:LDLZolo}
                  for the same reasoning given in \cite{NakF16}. 
                  The parameter estimation and the scaling of $A$ (Steps \ref{Alg:ZoloPD:1st} and \ref{Alg:ZoloPD:Scale}) is needed to make sure that the eigenvalues of the self-adjoint factor lie in the interval $[\ell,1]$ (see Lemma \ref{Lem:RationalIter} and the discussion following).
                  In our implementation, these are bounded using the MATLAB functions \texttt{normest} and \texttt{condest}.

\begin{algorithm}[ht]
\caption{Hyperbolic Zolo-PD for definite pseudosymmetric matrices} \label{Alg:ZoloPD}
 \begin{algorithmic}[1]
  \Require Signature matrix $\Sigma$ with $p$ positive and $n-p$ negative values on the diagonal, $A\in\mathbb{C}^{n\times n}$ such that $\Sigma A$ is Hermitian positive definite.
  \Ensure $S=\sign{A}$.
  \State Estimate $\alpha\gtrsim \max\{|\lambda|:\ \lambda\in\Lambda(A)\}$, $\beta\lesssim \min\{|\lambda|:\ \lambda\in\Lambda(A)\}$. \label{Alg:ZoloPD:1st}
  \State $X_0\leftarrow \frac{1}{\alpha}A$, $\ell\leftarrow\frac{\beta}{\alpha}$. \label{Alg:ZoloPD:Scale}
  \Statex \textbf{First iteration:}
  \For{$j=1,\dots, 2r$}\label{Alg:ZoloPD:CoeffStart}
    \State $c_j\leftarrow\ell^2\text{sn}^2(\frac{\iu K'}{2r+1};\ell')     /\text{cn}^2(\frac{\iu K'}{2r+1};\ell')$.
    \Comment See \eqref{Eq:Compcis}
  \EndFor
  \For{$j=1,\dots, r$}
    \State $a_j\leftarrow-\left(\prod_{k=1}^r(c_{2j-1}-c_{2k})\right)\cdot\left(\prod_{k=1,k\neq j}^r(c_{2j-1}-c_{2k-1})\right)$.
    \Comment See \eqref{Eq:ZPartialFrac2}
  \EndFor
  \State $\hat{C}\leftarrow\prod_j^r\frac{1+c_{2j-1}}{1+c_{2j}}$ \Comment See \eqref{Eq:hatZ}\label{Alg:ZoloPD:CoeffEnd}
  \State Compute $X_1$ according to \eqref{Eq:HRZolo}, using \texttt{LDLIQR2} algorithm in \cite{BenNP22}:
  \begin{align*}
 \left\{
                \begin{array}{ll}
                  \begin{bmatrix}
                    X_0\\\sqrt{c_{2j-1}}I
                  \end{bmatrix} = \begin{bmatrix}H_{1,j}\\H_{2,j}\end{bmatrix}R_j,\text{ where } \begin{bmatrix}H_{1,j}\\H_{2,j}\end{bmatrix}^*\begin{bmatrix}\Sigma&\\&\Sigma\end{bmatrix}\begin{bmatrix}H_{1,j}\\H_{2,j}\end{bmatrix} = \hat{\Sigma}
\\ \\
                  X_{1} \leftarrow \hat{C}(X_k+\sum_{j=1}^r\frac{a_j}{\sqrt{c_{2j-1}}}H_{1,j}\hat{\Sigma}H_{2,j}^*\Sigma).
                \end{array}
              \right.
\end{align*}
\State $\ell\leftarrow \hat{C}\ell\prod_{j=1}^r(\ell^2+c_{2j})/(\ell^2+c_{2j-1})$.
\State Repeat Step \ref{Alg:ZoloPD:CoeffStart} to Step \ref{Alg:ZoloPD:CoeffEnd} to update $c_j$ for $j=1,\dots,2r$ and $a_j$ for $j=1,\dots, r$ and $\hat{C}$.
\Statex \textbf{Second iteration:}
  \State Compute $X_2$ according to \eqref{Eq:LDLZolo}:
\begin{align*}
\left\{
                \begin{array}{ll}
 Z_{2j-1,k}=(X_k^*\Sigma X_k+c_{2j-1,k}\Sigma),
 \quad [L_j,D_j,P_j]=\text{ldl}(Z_{2j-1,k}),\\ \\
 X_{k+1}\leftarrow\hat{C}(X_k+\sum_{j=1}^r a_j X_k P_jL_j^{-*}D_j^{-1}L_j^{-1} P_j^{\tran} \Sigma).
 \end{array}\right.
\end{align*}
\If{ $\|X_2-X_1\|_F/\|X_2\|_F\leq u^{1/(2r+1)}$}
\State $S\leftarrow X_2$.
\Else
\State $A\leftarrow X_2$, return to Step \ref{Alg:ZoloPD:1st}.
\EndIf
\end{algorithmic}
\end{algorithm}

Algorithm \ref{Alg:ZoloPD} converges even for badly conditioned matrices. As explained in \cite{NakF16}, for well-conditioned $A$, it is possible to skip the first iteration or choose a lower Zolotarev rank $r<8$. We choose $r$ according to Table 3.1 in \cite{NakF16}.

In exact arithmetic, the algorithm converges in 2 steps, as argued above. As a safeguard for numerical errors we adopt the stopping criterion from \cite{NakF16}, $\|X_2-X_1\|_F/\|X_2\|_F\leq u^{1/(2r+1)}$, to guarantee convergence, using the known convergence rate of $2r+1$.  We assume calculations are carried out in IEEE double precision with unit roundoff $u=2^{-53}\approx 1.1 \times 10^{-16}$.


\FloatBarrier
\section{Numerical experiments}
\label{Sec:NumRes}
In this section we apply one step of spectral divide-and-conquer (Algorithm \ref{Alg:PseuDivConq}) on definite pseudosymmetric matrices. The matrix sign function is computed by the hyperbolic Zolo-PD algorithm (Algorithm \ref{Alg:ZoloPD}), algorithms based on the $\Sigma$DWH iteration presented in \cite{BenNP22}, or a scaled Newton iteration with suboptimal scaling presented in \cite{ByeX08}. We expect these algorithms to show the same convergence properties as in the symmetric case, due to Lemma \ref{Lem:IterOnSelfAdj}. Zolo-PD should converge in 2 steps, $\Sigma$DWH in 6 steps and Newton in 9 steps. We use Algorithm \ref{Alg:HypSubspaceChol} or \ref{Alg:HypSubspaceDefLDL} to compute $(\Sigma,\hat{\Sigma})$-orthogonal subspace representations used in the spectral division.  All experiments were performed in MATLAB R2017a using double-precision arithmetic running on Ubuntu 18.04.5, using an Intel(R) Core™ i7-8550U CPU with 4 cores, 8 threads, and a clock rate of 1.80 GHz. Random matrices were generated with a seed defined by \texttt{rng(0)}.

\subsection{Random pseudosymmetric matrices}
The goal of our first numerical experiment is to determine the achieved accuracy with different methods for computing the matrix sign function.
\paragraph{Example 1}
$\Sigma$ is a signature matrix, where the diagonal values are chosen to be $1$ or $-1$ with equal probability. Given a number $\kappa=\cond{A}$, we generate real $250\times 250$ matrices as $A=\Sigma Q D Q^{\tran}$. $D$ is a diagonal matrix containing equally spaced values between 1 and $\kappa$. $Q$ is a random orthogonal matrix (\texttt{Q=orth(rand(n,n))} in MATLAB). We perform 10 runs for different randomly generated matrices and compare the backward error represented by $\|Q_+^{\tran}\Sigma A Q_-\|_{\text{F}}/\|A\|_{\text{F}}$ that is achieved by the different methods described in this work, \cite{BenNP22} and \cite{ByeX08}.

The averaged results are given in Figure \ref{Fig:BackErr_all}. All methods yield backward errors smaller than $10^{-9}$, even for badly conditioned matrices. All show a similar behavior. Hyperbolic Zolo-PD exhibits the highest backward error. Compared to the DWH-based iteration this is expected, because Zolotarev functions of higher order are used. The direct application of Zolotarev functions of high degree is known to be unstable \cite{NakF16}. In the indefinite setting, this phenomenon seems to appear sooner than in the setting described in \cite{NakF16}. The accuracy of DWH can be improved by employing permuted Lagrangian graph (PLG) bases. This way, the accuracy is comparable to a Newton approach \cite{ByeX08}.
Permuted Lagrangian graph bases can also be employed for Zolotarev iterations of higher order but go beyond the scope of this work. 

Figure \ref{Fig:Rand} displays the data of the individual runs of the same experiment. Here we see that even badly conditioned matrices often achieve a backward error of $10^{-14}$, but some outliers increase the average. Further investigations are required in order to answer the question of what backward error can be achieved for a given matrix. The red crosses denote the matrices of a given $\kappa$ for which hyperbolic Zolo-PD performed worst. We see that for the same matrices $\Sigma$DWH with $\texttt{LDLIQR2}$ and the Newton iteration also perform worse than on other matrices generated in the same way. The quality therefore seems innate to the considered matrix. When PLG bases are employed, this relation can not be observed as clearly but is still noticeable.
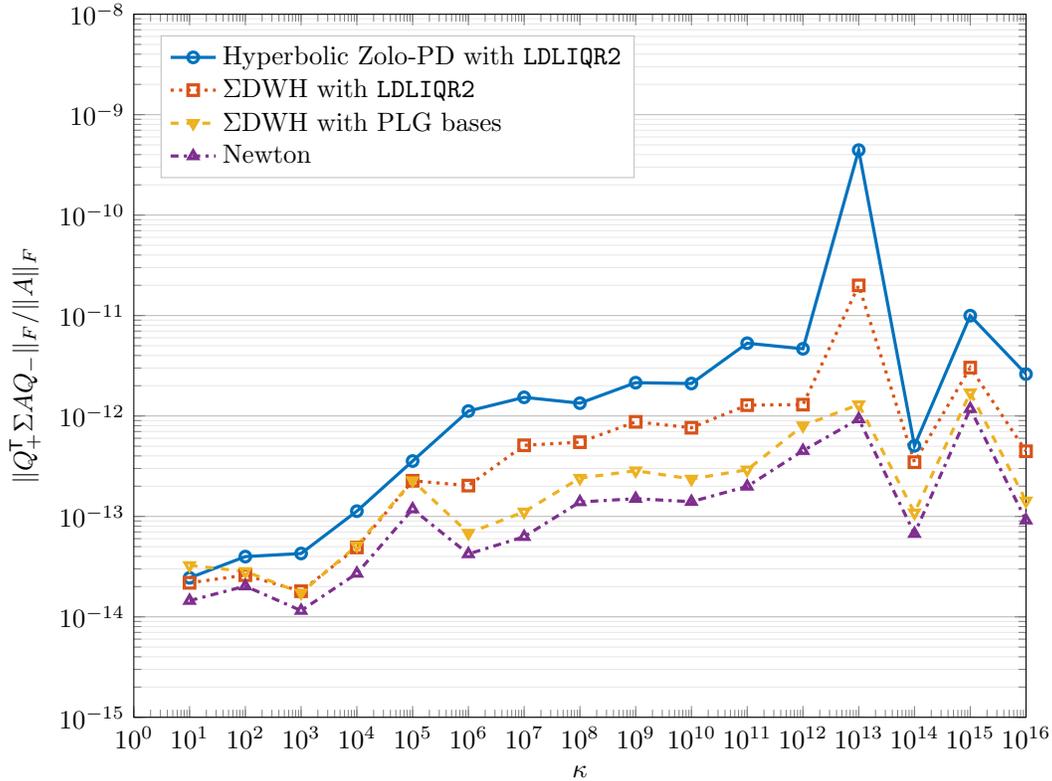
\begin{figure}[t]
\centering
 \newlength\figureheight
\newlength\figurewidth
\setlength\figureheight{0.6\textwidth}
\setlength\figurewidth{0.8\textwidth}
%
%
\definecolor{mycolor1}{rgb}{0.00000,0.44700,0.74100}%
\definecolor{mycolor2}{rgb}{0.85000,0.32500,0.09800}%
\definecolor{mycolor3}{rgb}{0.92900,0.69400,0.12500}%
\definecolor{mycolor4}{rgb}{0.49400,0.18400,0.55600}%
\begin{tikzpicture}

\begin{axis}[%
width=0.951\figurewidth,
height=\figureheight,
at={(0\figurewidth,0\figureheight)},
scale only axis,
xmin=1,
xmax=1e16,
ymode=log,
xmode=log,
ymin=1e-15,
ymax=1e-08,
ymajorgrids,
yminorgrids,
grid style={line width=.1pt, draw=gray!20},
major grid style={line width=.2pt,draw=gray!50},
yminorticks=true,
axis background/.style={fill=white},
legend style={legend cell align=left, align=left,draw=gray!50},
legend pos=north west,
every axis plot/.append style={very thick},
xlabel = $\kappa$,
ylabel = $\|Q_+^{\tran}\Sigma A Q_-\|_{F}/\|A\|_F$
]
\addplot [color=mycolor1, mark=o, mark options={solid, mycolor1}]
  table[row sep=crcr]{%
1e1	  2.44419545942173e-14\\
1e2	  3.97905623547788e-14\\
1e3	  4.27288757742412e-14\\
1e4	  1.12801882328322e-13\\
1e5	  3.55862724446711e-13\\
1e6	  1.11877191148471e-12\\
1e7	  1.53285449440623e-12\\
1e8	  1.34049484788953e-12\\
1e9	  2.14449083463424e-12\\
1e10	2.10415820466918e-12\\
1e11	5.29996649968618e-12\\
1e12	4.66395642777704e-12\\
1e13	4.43459612546108e-10\\
1e14	5.04221049545782e-13\\
1e15	9.96426307705613e-12\\
1e16	2.61172222929697e-12\\
};
\addlegendentry{Hyperbolic Zolo-PD with \texttt{LDLIQR2}}

\addplot [color=mycolor2, dotted, mark=square, mark options={solid, mycolor2}]
  table[row sep=crcr]{%
1e1	  2.18199182618386e-14\\
1e2	  2.60014557583473e-14\\
1e3	  1.79842329908799e-14\\
1e4	  4.90501933859871e-14\\
1e5	  2.26288940633217e-13\\
1e6	  2.02623609316681e-13\\
1e7	  5.10350600856577e-13\\
1e8	  5.48081847423486e-13\\
1e9	  8.73806770207915e-13\\
1e10	7.63356571415668e-13\\
1e11	1.28157812360762e-12\\
1e12	1.29598658403037e-12\\
1e13	1.99572525715005e-11\\
1e14	3.46452101634187e-13\\
1e15	3.03166844141216e-12\\
1e16	4.45267313314555e-13\\
};
\addlegendentry{$\Sigma$DWH with \texttt{LDLIQR2}}

\addplot [color=mycolor3, dashed, mark=triangle, mark options={solid, rotate=180, mycolor3}]
  table[row sep=crcr]{%
1e1	3.25047808554959e-14\\
1e2	2.83500289924541e-14\\
1e3	1.73334547181133e-14\\
1e4	5.11550962067467e-14\\
1e5	2.27393661357531e-13\\
1e6	6.82035337788212e-14\\
1e7	1.11065886426854e-13\\
1e8	2.40634745371737e-13\\
1e9	2.849892262428e-13\\
1e10	2.35921755311522e-13\\
1e11	2.91229955133149e-13\\
1e12	8.01091769142788e-13\\
1e13	1.29777139841408e-12\\
1e14	1.08817346165869e-13\\
1e15	1.70659976749395e-12\\
1e16	1.42873941648013e-13\\
};
\addlegendentry{$\Sigma$DWH with PLG bases}

\addplot [color=mycolor4, dashdotted, mark=triangle, mark options={solid, mycolor4}]
  table[row sep=crcr]{%
1e1	1.448960118106e-14\\
1e2	2.02264520202745e-14\\
1e3	1.1534384537127e-14\\
1e4	2.70310039733011e-14\\
1e5	1.1814128678454e-13\\
1e6	4.22512064757395e-14\\
1e7	6.27397691124973e-14\\
1e8	1.39140121769517e-13\\
1e9	1.50116290770914e-13\\
1e10	1.40047641324164e-13\\
1e11	1.9882875102826e-13\\
1e12	4.50789115250902e-13\\
1e13	9.31803040217272e-13\\
1e14	6.73109033304463e-14\\
1e15	1.17749330093667e-12\\
1e16	9.14752336530351e-14\\
};
\addlegendentry{Newton}

\end{axis}
\end{tikzpicture}%
 \caption{\emph{Example 1}: Average residual after one spectral divide-and-conquer step, for 10 random matrices of size $250 \times 250$ with certain condition numbers. Different methods are used for computing the matrix sign function.}\label{Fig:BackErr_all}
\end{figure}                                                  

\begin{figure}[t]
        \centering
        \begin{subfigure}[b]{0.45\textwidth}
            \centering
            \setlength\figureheight{0.525\textwidth}
\setlength\figurewidth{0.7\textwidth}
%
%
\definecolor{mycolor1}{rgb}{0.00000,0.44700,0.74100}%
\begin{tikzpicture}

\begin{axis}[%
width=0.951\figurewidth,
height=\figureheight,
at={(0\figurewidth,0\figureheight)},
scale only axis,
xmin=0,
xmax=1e16,
ymode=log,
xmode=log,
ymin=1e-15,
ymax=1e-08,
ymajorgrids,
yminorgrids,
grid style={line width=.1pt, draw=gray!20},
major grid style={line width=.2pt,draw=gray!50},
yminorticks=true,
axis background/.style={fill=white},
legend style={legend cell align=left, align=left,draw=gray!50},
legend pos=north west,
xlabel = $\kappa$,
ylabel = $\|Q_+^{\tran}\Sigma A Q_-\|_{F}/\|A\|$
]
\addplot [color=mycolor1, forget plot, very thick]
  table[row sep=crcr]{%
1e1	2.44419545942173e-14\\
1e2	3.97905623547788e-14\\
1e3	4.27288757742412e-14\\
1e4	1.12801882328322e-13\\
1e5	3.55862724446711e-13\\
1e6	1.11877191148471e-12\\
1e7	1.53285449440623e-12\\
1e8	1.34049484788953e-12\\
1e9	2.14449083463424e-12\\
1e10	2.10415820466918e-12\\
1e11	5.29996649968618e-12\\
1e12	4.66395642777704e-12\\
1e13	4.43459612546108e-10\\
1e14	5.04221049545782e-13\\
1e15	9.96426307705613e-12\\
1e16	2.61172222929697e-12\\
};
\addplot [color=gray, only marks, mark=x, mark options={solid, gray}, forget plot]
  table[row sep=crcr]{%
1e1	2.44772846996628e-14\\
1e2	1.59670430931137e-14\\
1e3	3.32974838562436e-14\\
1e4	2.58377980056676e-14\\
1e5	4.65720065030103e-13\\
1e6	7.6441003798914e-14\\
1e7	1.49860311731406e-13\\
1e8	1.02127990317693e-13\\
1e9	1.04059953060291e-13\\
1e10	2.65600848144136e-13\\
1e11	1.69660742011943e-13\\
1e12	5.75335637043844e-13\\
1e13	2.04957561807336e-14\\
1e14	5.63810747285545e-14\\
1e15	1.99579201210403e-13\\
1e16	3.20478842776474e-12\\
};
\addplot [color=gray, only marks, mark=x, mark options={solid, gray}, forget plot]
  table[row sep=crcr]{%
1e1	1.98036334345289e-14\\
1e2	7.84419087900944e-14\\
1e3	3.53662868727124e-14\\
1e4	1.23092743802938e-13\\
1e5	6.48457387085848e-13\\
1e6	6.93530943774859e-14\\
1e7	2.81221900488193e-14\\
1e8	1.32797931669118e-12\\
1e9	8.86556606717125e-14\\
1e10	1.03003698931531e-13\\
1e11	1.56473415328958e-12\\
1e12	2.83940848387238e-13\\
1e13	6.36290476745744e-13\\
1e14	2.02704879443617e-12\\
1e15	3.74147304906655e-13\\
1e16	6.20831167880506e-12\\
};
\addplot [color=gray, only marks, mark=x, mark options={solid, gray}, forget plot]
  table[row sep=crcr]{%
1e1	1.0031006582272e-14\\
1e2	3.49869772813876e-14\\
1e3	4.33029605451663e-14\\
1e4	5.08054436047042e-14\\
1e5	1.7130634738278e-13\\
1e6	4.72084349087757e-14\\
1e7	2.90687155707249e-14\\
1e8	3.87717048501721e-13\\
1e9	3.46483665756641e-14\\
1e10	4.05782977897342e-13\\
1e11	1.74863790325237e-11\\
1e12	3.46978517081858e-14\\
1e13	8.16917616026233e-14\\
1e14	3.09921327671528e-13\\
1e15	1.82463213306412e-11\\
1e16	3.4279578478717e-14\\
};
\addplot [color=gray, only marks, mark=x, mark options={solid, gray}, forget plot]
  table[row sep=crcr]{%
1e1	3.91565189795459e-14\\
1e2	2.13624033368109e-14\\
1e3	3.65923045530083e-14\\
1e4	1.54428781553527e-13\\
1e5	1.02630624714309e-13\\
1e6	1.28454827371948e-13\\
1e7	1.26445549567941e-13\\
1e8	6.99696368416055e-13\\
1e9	7.90830818635287e-14\\
1e10	4.44833002677973e-13\\
1e11	6.35894184365518e-13\\
1e12	2.63691260448126e-11\\
1e13	8.69568840520769e-10\\
1e14	7.69394629150337e-14\\
1e15	5.92498934815125e-14\\
1e16	1.43886314457444e-11\\
};
\addplot [color=gray, only marks, mark=x, mark options={solid, gray}, forget plot]
  table[row sep=crcr]{%
1e1	9.68817673199865e-15\\
1e2	1.15187730378942e-13\\
1e3	2.87782238554298e-14\\
1e4	8.09603456044973e-14\\
1e5	3.40666146490563e-13\\
1e6	8.20123505541876e-13\\
1e7	1.5346383493168e-13\\
1e8	2.45020482882949e-14\\
1e9	2.79144819960946e-12\\
1e10	2.52175562759931e-13\\
1e11	7.82331356586683e-13\\
1e12	4.92711411111119e-14\\
1e13	2.27216240304663e-12\\
1e14	4.91439973939255e-13\\
1e15	6.62251078374517e-11\\
1e16	7.27630654617415e-14\\
};
\addplot [color=gray, only marks, mark=x, mark options={solid, gray}, forget plot]
  table[row sep=crcr]{%
1e1	8.05982180207929e-15\\
1e2	2.37910050466746e-14\\
1e3	4.91744094132898e-14\\
1e4	1.75631141271208e-13\\
1e5	1.10324439528251e-13\\
1e6	8.68991257615172e-14\\
1e7	1.32146909134742e-11\\
1e8	2.17034675577269e-14\\
1e9	1.04191416956329e-13\\
1e10	5.88803170259074e-12\\
1e11	9.97946659978214e-13\\
1e12	7.06245032722372e-14\\
1e13	8.99475993085827e-14\\
1e14	3.72654770328589e-14\\
1e15	4.07075785746314e-14\\
1e16	3.91642522575926e-14\\
};
\addplot [color=gray, only marks, mark=x, mark options={solid, gray}, forget plot]
  table[row sep=crcr]{%
1e1	1.27932553131701e-14\\
1e2	1.77299541427985e-14\\
1e3	8.35649082138283e-14\\
1e4	9.65457560014004e-14\\
1e5	1.00522093249444e-13\\
1e6	8.84180370714002e-14\\
1e7	4.70832061013639e-13\\
1e8	7.0693328585716e-12\\
1e9	1.09066330872973e-11\\
1e10	6.60376881921221e-13\\
1e11	2.86941044851138e-11\\
1e12	1.40957707365961e-11\\
1e13	4.79650422194607e-13\\
1e14	1.97783427809627e-12\\
1e15	1.38531891867193e-11\\
1e16	5.66088012277188e-13\\
};
\addplot [color=gray, only marks, mark=x, mark options={solid, gray}, forget plot]
  table[row sep=crcr]{%
1e1	2.05336031282199e-14\\
1e2	1.26354586469519e-14\\
1e3	5.0579826070275e-14\\
1e4	6.36955126956254e-14\\
1e5	2.13128246040886e-14\\
1e6	2.15237451129125e-14\\
1e7	8.00791536777308e-14\\
1e8	2.50112586207059e-13\\
1e9	1.4555745765246e-13\\
1e10	2.68030341794506e-13\\
1e11	7.52316390435465e-13\\
1e12	6.81617552583679e-14\\
1e13	1.19178871774501e-13\\
1e14	2.32684512309394e-14\\
1e15	5.34447386927843e-14\\
1e16	1.4796552714891e-12\\
};
\addplot [color=gray, only marks, mark=x, mark options={solid, gray}, forget plot]
  table[row sep=crcr]{%
1e1	2.87686381019974e-14\\
1e2	1.35381701556692e-14\\
1e3	4.13590213464235e-14\\
1e4	4.29768031924831e-14\\
1e5	8.52644167356443e-13\\
1e6	3.24422925613891e-14\\
1e7	2.1023248607496e-13\\
1e8	6.77698518182411e-14\\
1e9	2.84922776331158e-12\\
1e10	8.82496751202028e-12\\
1e11	1.89849571085735e-12\\
1e12	4.9806025107463e-12\\
1e13	3.05075902688349e-13\\
1e14	1.48621349758424e-14\\
1e15	2.00936208883611e-13\\
1e16	6.16659089616696e-14\\
};
\addplot [color=gray, only marks, mark=x, mark options={solid, gray}, forget plot]
  table[row sep=crcr]{%
1e1	7.11076071686985e-14\\
1e2	6.4264972675345e-14\\
1e3	2.52733330160347e-14\\
1e4	3.14044497551164e-13\\
1e5	7.45043149025285e-13\\
1e6	9.81685504834092e-12\\
1e7	8.65749727971237e-13\\
1e8	3.45400694252575e-12\\
1e9	4.34140335934407e-12\\
1e10	3.92877951795414e-12\\
1e11	1.78022816996385e-14\\
1e12	1.12033248834478e-13\\
1e13	3.56102279174677e-09\\
1e14	2.72495204313664e-14\\
1e15	3.89947489999521e-13\\
1e16	6.18746517295344e-14\\
};
\addplot [color=red, only marks, mark=x, mark options={solid, red}, forget plot]
  table[row sep=crcr]{%
1e1	7.11076071686985e-14\\
1e2	1.15187730378942e-13\\
1e3	8.35649082138283e-14\\
1e4	3.14044497551164e-13\\
1e5	8.52644167356443e-13\\
1e6	9.81685504834092e-12\\
1e7	1.32146909134742e-11\\
1e8	7.0693328585716e-12\\
1e9	1.09066330872973e-11\\
1e10	8.82496751202028e-12\\
1e11	2.86941044851138e-11\\
1e12	2.63691260448126e-11\\
1e13	3.56102279174677e-09\\
1e14	2.02704879443617e-12\\
1e15	6.62251078374517e-11\\
1e16	1.43886314457444e-11\\
};
\end{axis}
\end{tikzpicture}%
            \caption{ Hyperbolic Zolo-PD}  
            \label{fig:ZolRand}
        \end{subfigure}
	\hspace{1em}
        \begin{subfigure}[b]{0.45\textwidth}  
            \centering 
            \setlength\figureheight{0.525\textwidth}
\setlength\figurewidth{0.7\textwidth}
%
%
\definecolor{mycolor1}{rgb}{0.85000,0.32500,0.09800}%
\begin{tikzpicture}

\begin{axis}[%
width=0.951\figurewidth,
height=\figureheight,
at={(0\figurewidth,0\figureheight)},
scale only axis,
xmin=0,
xmax=1e16,
ymode=log,
xmode=log,
ymin=1e-15,
ymax=1e-08,
ymajorgrids,
yminorgrids,
grid style={line width=.1pt, draw=gray!20},
major grid style={line width=.2pt,draw=gray!50},
yminorticks=true,
axis background/.style={fill=white},
legend style={legend cell align=left, align=left,draw=gray!50},
legend pos=north west,
xlabel = $\kappa$,
ylabel = $\|Q_+^{\tran}\Sigma A Q_-\|_{F}/\|A\|$
]
\addplot [color=mycolor1, dashed, forget plot, very thick]
  table[row sep=crcr]{%
1e1	2.18199182618386e-14\\
1e2	2.60014557583473e-14\\
1e3	1.79842329908799e-14\\
1e4	4.90501933859871e-14\\
1e5	2.26288940633217e-13\\
1e6	2.02623609316681e-13\\
1e7	5.10350600856577e-13\\
1e8	5.48081847423486e-13\\
1e9	8.73806770207915e-13\\
1e10	7.63356571415668e-13\\
1e11	1.28157812360762e-12\\
1e12	1.29598658403037e-12\\
1e13	1.99572525715005e-11\\
1e14	3.46452101634187e-13\\
1e15	3.03166844141216e-12\\
1e16	4.45267313314555e-13\\
};
\addplot [color=gray, only marks, mark=x, mark options={solid, gray}, forget plot]
  table[row sep=crcr]{%
1e1	2.39128109419803e-14\\
1e2	9.07678270738782e-15\\
1e3	1.97356102770054e-14\\
1e4	2.03076744135627e-14\\
1e5	3.79580340111838e-13\\
1e6	6.6214023651833e-14\\
1e7	8.40623561993001e-14\\
1e8	1.91594924507775e-14\\
1e9	7.39251002768383e-14\\
1e10	1.61791769015534e-13\\
1e11	5.38253799221877e-14\\
1e12	5.14701922196301e-13\\
1e13	1.65570525633127e-14\\
1e14	4.04026638848029e-14\\
1e15	4.96341387778217e-14\\
1e16	6.85583128608413e-13\\
};
\addplot [color=gray, only marks, mark=x, mark options={solid, gray}, forget plot]
  table[row sep=crcr]{%
1e1	1.78580129780281e-14\\
1e2	6.06723237115426e-14\\
1e3	1.47581560284382e-14\\
1e4	4.3103853342068e-14\\
1e5	5.87168580633389e-13\\
1e6	1.93919940140375e-14\\
1e7	1.6741838794989e-14\\
1e8	9.80176646040805e-13\\
1e9	4.6119142060946e-14\\
1e10	9.76557022478693e-14\\
1e11	5.58242898484979e-13\\
1e12	7.6244469816142e-14\\
1e13	3.79344256085225e-13\\
1e14	1.34439247563775e-12\\
1e15	1.44674279947905e-13\\
1e16	1.02808829233702e-12\\
};
\addplot [color=gray, only marks, mark=x, mark options={solid, gray}, forget plot]
  table[row sep=crcr]{%
1e1	1.09122255414301e-14\\
1e2	2.33665155455426e-14\\
1e3	1.2246468406361e-14\\
1e4	3.89180786698546e-14\\
1e5	8.99252866753757e-14\\
1e6	3.02181805019868e-14\\
1e7	2.34930173817881e-14\\
1e8	1.1830917923817e-13\\
1e9	3.21637043630541e-14\\
1e10	2.34203699599705e-13\\
1e11	6.29260503901761e-12\\
1e12	2.32966714576468e-14\\
1e13	6.12960890766468e-14\\
1e14	1.42922986107614e-13\\
1e15	2.24948803136772e-12\\
1e16	1.98250899266169e-14\\
};
\addplot [color=gray, only marks, mark=x, mark options={solid, gray}, forget plot]
  table[row sep=crcr]{%
1e1	3.99433136189025e-14\\
1e2	9.87710089224855e-15\\
1e3	2.18705112905233e-14\\
1e4	6.30791996685904e-14\\
1e5	2.52945222960883e-14\\
1e6	9.35388966823932e-14\\
1e7	1.33234849051789e-13\\
1e8	2.03517128087917e-13\\
1e9	6.21648731000999e-14\\
1e10	1.44512171036415e-13\\
1e11	1.40709421797573e-13\\
1e12	4.52714163935779e-12\\
1e13	6.97062951476614e-11\\
1e14	3.94367517450279e-14\\
1e15	4.19556485305036e-14\\
1e16	2.06108561973784e-12\\
};
\addplot [color=gray, only marks, mark=x, mark options={solid, gray}, forget plot]
  table[row sep=crcr]{%
1e1	1.16403542571965e-14\\
1e2	8.982337916496e-14\\
1e3	1.26241532508753e-14\\
1e4	6.01584606092128e-14\\
1e5	3.57648618968149e-13\\
1e6	2.42100785615893e-13\\
1e7	1.11196079930522e-13\\
1e8	2.27640427791538e-14\\
1e9	1.33537040892791e-12\\
1e10	7.16621778574908e-14\\
1e11	3.86949481402431e-13\\
1e12	2.59997460081534e-14\\
1e13	5.69098682853416e-13\\
1e14	2.40576579883443e-13\\
1e15	2.42116638582156e-11\\
1e16	6.10244242123642e-14\\
};
\addplot [color=gray, only marks, mark=x, mark options={solid, gray}, forget plot]
  table[row sep=crcr]{%
1e1	1.01932100611712e-14\\
1e2	1.61344516514239e-14\\
1e3	2.17910960501657e-14\\
1e4	3.63760486377881e-14\\
1e5	7.80769458900578e-14\\
1e6	4.24669617291075e-14\\
1e7	3.99431211958761e-12\\
1e8	1.53421768996072e-14\\
1e9	7.18703788853884e-14\\
1e10	3.16403413658167e-12\\
1e11	4.43275975005064e-13\\
1e12	1.95994722909712e-14\\
1e13	6.36975740535051e-14\\
1e14	2.15594582503782e-14\\
1e15	1.95779012781267e-14\\
1e16	2.3706056392278e-14\\
};
\addplot [color=gray, only marks, mark=x, mark options={solid, gray}, forget plot]
  table[row sep=crcr]{%
1e1	1.00640684897317e-14\\
1e2	1.23216253809079e-14\\
1e3	1.7825796211859e-14\\
1e4	5.48370156734493e-14\\
1e5	5.45426159760949e-14\\
1e6	5.84498799037775e-14\\
1e7	3.12008566116092e-13\\
1e8	2.64778993882839e-12\\
1e9	5.36849088554613e-12\\
1e10	2.99426666568913e-13\\
1e11	4.06392522561006e-12\\
1e12	4.20195714338536e-12\\
1e13	2.53463104129941e-13\\
1e14	1.59486866382595e-12\\
1e15	3.25226144126872e-12\\
1e16	1.42955288582942e-13\\
};
\addplot [color=gray, only marks, mark=x, mark options={solid, gray}, forget plot]
  table[row sep=crcr]{%
1e1	2.74251784690874e-14\\
1e2	8.29630345092827e-15\\
1e3	1.69400814302533e-14\\
1e4	3.37788664107511e-14\\
1e5	2.18794885394479e-14\\
1e6	1.50421139390909e-14\\
1e7	3.67987536932726e-14\\
1e8	1.35341168981657e-13\\
1e9	8.80760382743651e-14\\
1e10	2.8930997172066e-13\\
1e11	3.60924678347638e-13\\
1e12	6.37513810534471e-14\\
1e13	1.1042345381775e-13\\
1e14	1.42936383273315e-14\\
1e15	1.83411979340539e-14\\
1e16	3.65521479429917e-13\\
};
\addplot [color=gray, only marks, mark=x, mark options={solid, gray}, forget plot]
  table[row sep=crcr]{%
1e1	2.12210880189762e-14\\
1e2	1.19368943374702e-14\\
1e3	2.44622436870047e-14\\
1e4	2.96664242043227e-14\\
1e5	1.70493635428497e-13\\
1e6	2.15448132134095e-14\\
1e7	1.18455305821154e-13\\
1e8	2.03603209789816e-14\\
1e9	5.81138903513366e-13\\
1e10	2.17238909063467e-12\\
1e11	5.06934428044944e-13\\
1e12	3.48426916802892e-12\\
1e13	1.8052970500549e-13\\
1e14	1.31152519053864e-14\\
1e15	1.41761278540545e-13\\
1e16	2.81454852829968e-14\\
};
\addplot [color=gray, only marks, mark=x, mark options={solid, gray}, forget plot]
  table[row sep=crcr]{%
1e1	4.50289202418823e-14\\
1e2	1.85091807410616e-14\\
1e3	1.75882132763132e-14\\
1e4	1.10276312230271e-13\\
1e5	4.98279371813236e-13\\
1e6	1.43726844391528e-12\\
1e7	2.73203121989257e-13\\
1e8	1.3180583799494e-12\\
1e9	1.07874826713105e-12\\
1e10	9.98580328893747e-13\\
1e11	8.38870844374331e-15\\
1e12	2.2904226708942e-14\\
1e13	1.28231820649759e-10\\
1e14	1.29525467741776e-14\\
1e15	1.87326638260613e-13\\
1e16	3.67382686351594e-14\\
};
\addplot [color=red, only marks, mark=x, mark options={solid, red}, forget plot]
  table[row sep=crcr]{%
1e1	4.50289202418823e-14\\
1e2	8.982337916496e-14\\
1e3	1.7825796211859e-14\\
1e4	1.10276312230271e-13\\
1e5	1.70493635428497e-13\\
1e6	1.43726844391528e-12\\
1e7	3.99431211958761e-12\\
1e8	2.64778993882839e-12\\
1e9	5.36849088554613e-12\\
1e10	2.17238909063467e-12\\
1e11	4.06392522561006e-12\\
1e12	4.52714163935779e-12\\
1e13	1.28231820649759e-10\\
1e14	1.34439247563775e-12\\
1e15	2.42116638582156e-11\\
1e16	2.06108561973784e-12\\
};
\end{axis}
\end{tikzpicture}%
            \caption%
            {{ $\Sigma$DWH with \texttt{LDLIQR2}}}    
            \label{fig:DWHRand}
        \end{subfigure}
        \vskip\baselineskip
        \begin{subfigure}[b]{0.45\textwidth}   
            \centering 
            \setlength\figureheight{0.525\textwidth}
\setlength\figurewidth{0.7\textwidth}
%
%
\definecolor{mycolor1}{rgb}{0.92900,0.69400,0.12500}%
\begin{tikzpicture}

\begin{axis}[%
width=0.951\figurewidth,
height=\figureheight,
at={(0\figurewidth,0\figureheight)},
scale only axis,
xmin=1,
xmax=1e16,
ymode=log,
xmode=log,
ymin=1e-15,
ymax=1e-08,
ymajorgrids,
yminorgrids,
grid style={line width=.1pt, draw=gray!20},
major grid style={line width=.2pt,draw=gray!50},
yminorticks=true,
axis background/.style={fill=white},
legend style={legend cell align=left, align=left,draw=gray!50},
legend pos=north west,
xlabel = $\kappa$,
ylabel = $\|Q_+^{\tran}\Sigma A Q_-\|_{F}/\|A\|$
]
\addplot [color=mycolor1, dashed, forget plot, very thick]
  table[row sep=crcr]{%
1e1	3.25047808554959e-14\\
1e2	2.83500289924541e-14\\
1e3	1.73334547181133e-14\\
1e4	5.11550962067467e-14\\
1e5	2.27393661357531e-13\\
1e6	6.82035337788212e-14\\
1e7	1.11065886426854e-13\\
1e8	2.40634745371737e-13\\
1e9	2.849892262428e-13\\
1e10	2.35921755311522e-13\\
1e11	2.91229955133149e-13\\
1e12	8.01091769142788e-13\\
1e13	1.29777139841408e-12\\
1e14	1.08817346165869e-13\\
1e15	1.70659976749395e-12\\
1e16	1.42873941648013e-13\\
};
\addplot [color=gray, only marks, mark=x, mark options={solid, gray}, forget plot]
  table[row sep=crcr]{%
1e1	4.88009619655508e-14\\
1e2	1.03034161485787e-14\\
1e3	1.71508161642274e-14\\
1e4	1.78064648377514e-14\\
1e5	5.99124686160394e-13\\
1e6	6.84748303927205e-14\\
1e7	8.42391213157705e-14\\
1e8	2.37906763016411e-14\\
1e9	6.3762175447681e-14\\
1e10	7.0128023074701e-14\\
1e11	4.57624195856344e-14\\
1e12	3.05004879410692e-13\\
1e13	1.54960763322228e-14\\
1e14	4.29054109986228e-14\\
1e15	4.47965882871395e-14\\
1e16	1.54070451362466e-13\\
};
\addplot [color=gray, only marks, mark=x, mark options={solid, gray}, forget plot]
  table[row sep=crcr]{%
1e1	2.27389245400208e-14\\
1e2	5.02453778652101e-14\\
1e3	1.74505657881403e-14\\
1e4	4.37677944166761e-14\\
1e5	6.19344003508329e-13\\
1e6	2.1557423563851e-14\\
1e7	1.59398027022993e-14\\
1e8	3.48185850733098e-13\\
1e9	5.29371711222075e-14\\
1e10	7.02071885978631e-14\\
1e11	2.13235537238568e-13\\
1e12	6.64158842207559e-14\\
1e13	4.07963026255592e-13\\
1e14	1.92730480899007e-13\\
1e15	9.35294410820617e-14\\
1e16	2.4053880902436e-13\\
};
\addplot [color=gray, only marks, mark=x, mark options={solid, gray}, forget plot]
  table[row sep=crcr]{%
1e1	1.52152641999249e-14\\
1e2	3.75387071237551e-14\\
1e3	1.51464750042708e-14\\
1e4	2.79619715761701e-14\\
1e5	8.24046189246636e-14\\
1e6	3.08814042510767e-14\\
1e7	3.2305889652887e-14\\
1e8	8.21116553568934e-14\\
1e9	2.98032607309103e-14\\
1e10	8.58285747723027e-14\\
1e11	1.2572617523887e-12\\
1e12	1.67536867541742e-14\\
1e13	4.93798757755282e-14\\
1e14	5.81486574235109e-14\\
1e15	7.97780230385018e-13\\
1e16	2.17122425163587e-14\\
};
\addplot [color=gray, only marks, mark=x, mark options={solid, gray}, forget plot]
  table[row sep=crcr]{%
1e1	4.91529072927878e-14\\
1e2	1.17997882626236e-14\\
1e3	1.80845560633568e-14\\
1e4	7.97699999684861e-14\\
1e5	2.30927418512695e-14\\
1e6	1.0198194463084e-13\\
1e7	9.46461027862377e-14\\
1e8	1.86952499961654e-13\\
1e9	5.63120761812989e-14\\
1e10	1.28468879832855e-13\\
1e11	1.11448726773614e-13\\
1e12	3.05751287534762e-13\\
1e13	8.24022199258467e-12\\
1e14	3.55537599254263e-14\\
1e15	5.29243627620495e-14\\
1e16	5.80537294290531e-13\\
};
\addplot [color=gray, only marks, mark=x, mark options={solid, gray}, forget plot]
  table[row sep=crcr]{%
1	1.53509943029077e-14\\
2	9.03285616356152e-14\\
3	1.34110614886419e-14\\
4	5.73471060088987e-14\\
5	3.40808265210058e-13\\
6	1.87079744098317e-13\\
7	7.44967483433976e-14\\
8	2.62710041022709e-14\\
9	6.64145382940876e-13\\
10	6.8370129817173e-14\\
11	3.17680335542466e-13\\
12	2.49480093690378e-14\\
13	1.82660370284221e-13\\
14	8.61548993880635e-14\\
15	1.39858679038958e-11\\
16	3.4884091354054e-14\\
};
\addplot [color=gray, only marks, mark=x, mark options={solid, gray}, forget plot]
  table[row sep=crcr]{%
1e1	1.20562827992122e-14\\
1e2	1.99272092323714e-14\\
1e3	1.55869989393329e-14\\
1e4	3.53869331289418e-14\\
1e5	5.26553355830777e-14\\
1e6	4.89803423668628e-14\\
1e7	2.01425336396679e-13\\
1e8	1.8184490304092e-14\\
1e9	6.44241580921118e-14\\
1e10	7.66913802440022e-13\\
1e11	1.65783544403179e-13\\
1e12	1.94217740578294e-14\\
1e13	4.88970493878097e-14\\
1e14	2.81614106482595e-14\\
1e15	2.62135258680999e-14\\
1e16	2.47469336173079e-14\\
};
\addplot [color=gray, only marks, mark=x, mark options={solid, gray}, forget plot]
  table[row sep=crcr]{%
1e1	1.67305462146146e-14\\
1e2	1.30901428937465e-14\\
1e3	1.44337692428179e-14\\
1e4	6.36004222321045e-14\\
1e5	3.37372999686801e-14\\
1e6	6.49753471643591e-14\\
1e7	2.92772006853301e-13\\
1e8	3.71026776268581e-13\\
1e9	1.47765745384468e-12\\
1e10	1.8178757157713e-13\\
1e11	5.02492100081865e-13\\
1e12	4.40487270513468e-12\\
1e13	2.19478005587862e-13\\
1e14	5.96520912042647e-13\\
1e15	1.73103102551984e-12\\
1e16	1.6606913006063e-13\\
};
\addplot [color=gray, only marks, mark=x, mark options={solid, gray}, forget plot]
  table[row sep=crcr]{%
1e1	2.73984223976267e-14\\
1e2	8.07891008102211e-15\\
1e3	1.89208665025211e-14\\
1e4	2.92140146941418e-14\\
1e5	1.62835484690329e-14\\
1e6	1.81986472942335e-14\\
1e7	3.92303414795777e-14\\
1e8	1.51433600667272e-13\\
1e9	1.04082634491386e-13\\
1e10	2.32608535287373e-13\\
1e11	1.62473163703645e-13\\
1e12	3.47415050866228e-14\\
1e13	8.33326486192434e-14\\
1e14	1.56142061750171e-14\\
1e15	2.27758892527609e-14\\
1e16	1.48722838717649e-13\\
};
\addplot [color=gray, only marks, mark=x, mark options={solid, gray}, forget plot]
  table[row sep=crcr]{%
1e1	3.48489858777978e-14\\
1e2	1.3919139812138e-14\\
1e3	2.43283891756103e-14\\
1e4	4.52188618480247e-14\\
1e5	1.19876835443933e-13\\
1e6	2.03868625183384e-14\\
1e7	8.76425346507936e-14\\
1e8	2.48978089832104e-14\\
1e9	1.39825934595684e-13\\
1e10	5.14811575938604e-13\\
1e11	1.27906147952542e-13\\
1e12	2.81293613379237e-12\\
1e13	7.43495745967773e-14\\
1e14	1.38397810292387e-14\\
1e15	1.12952408885526e-13\\
1e16	3.60140574461461e-14\\
};
\addplot [color=gray, only marks, mark=x, mark options={solid, gray}, forget plot]
  table[row sep=crcr]{%
1e1	8.27545189645153e-14\\
1e2	2.82690368694798e-14\\
1e3	1.88210488122137e-14\\
1e4	1.11477393356272e-13\\
1e5	3.86609278455871e-13\\
1e6	1.19518791507613e-13\\
1e7	1.87960980087598e-13\\
1e8	1.17349309103866e-12\\
1e9	1.96942014981164e-13\\
1e10	2.40093271777192e-13\\
1e11	8.25582366127994e-15\\
1e12	2.0071826066953e-14\\
1e13	3.65593536471688e-12\\
1e14	1.85439431289e-14\\
1e15	1.98126299001179e-13\\
1e16	2.14435680906291e-14\\
};
\addplot [color=red, only marks, mark=x, mark options={solid, red}, forget plot]
  table[row sep=crcr]{%
1e1	8.27545189645153e-14\\
1e2	9.03285616356152e-14\\
1e3	1.44337692428179e-14\\
1e4	1.11477393356272e-13\\
1e5	1.19876835443933e-13\\
1e6	1.19518791507613e-13\\
1e7	2.01425336396679e-13\\
1e8	3.71026776268581e-13\\
1e9	1.47765745384468e-12\\
1e10	5.14811575938604e-13\\
1e11	5.02492100081865e-13\\
1e12	3.05751287534762e-13\\
1e13	3.65593536471688e-12\\
1e14	1.92730480899007e-13\\
1e15	1.39858679038958e-11\\
1e16	5.80537294290531e-13\\
};
\end{axis}
\end{tikzpicture}%
            \caption
            {{ $\Sigma$DWH with PLG bases}}    
            \label{fig:DWHSubsRand}
        \end{subfigure}
        \hspace{1em}
        \begin{subfigure}[b]{0.45\textwidth}   
            \centering 
            \setlength\figureheight{0.525\textwidth}
\setlength\figurewidth{0.7\textwidth}
%
%
\definecolor{mycolor1}{rgb}{0.49400,0.18400,0.55600}%
\begin{tikzpicture}

\begin{axis}[%
width=0.951\figurewidth,
height=\figureheight,
at={(0\figurewidth,0\figureheight)},
scale only axis,
xmin=0,
xmax=1e16,
ymode=log,
xmode=log,
ymin=1e-15,
ymax=1e-08,
ymajorgrids,
yminorgrids,
grid style={line width=.1pt, draw=gray!20},
major grid style={line width=.2pt,draw=gray!50},
yminorticks=true,
axis background/.style={fill=white},
legend style={legend cell align=left, align=left,draw=gray!50},
legend pos=north west,
xlabel = $\kappa$,
ylabel = $\|Q_+^{\tran}\Sigma A Q_-\|_{F}/\|A\|$
]
\addplot [color=mycolor1, dashdotted, forget plot, very thick]
 table[row sep=crcr]{%
1e1	1.448960118106e-14\\
1e2	2.02264520202745e-14\\
1e3	1.1534384537127e-14\\
1e4	2.70310039733011e-14\\
1e5	1.1814128678454e-13\\
1e6	4.22512064757395e-14\\
1e7	6.27397691124973e-14\\
1e8	1.39140121769517e-13\\
1e9	1.50116290770914e-13\\
1e10	1.40047641324164e-13\\
1e11	1.9882875102826e-13\\
1e12	4.50789115250902e-13\\
1e13	9.31803040217272e-13\\
1e14	6.73109033304463e-14\\
1e15	1.17749330093667e-12\\
1e16	9.14752336530351e-14\\
};
\addplot [color=gray, only marks, mark=x, mark options={solid, gray}, forget plot]
  table[row sep=crcr]{%
1e1	1.85705232509419e-14\\
1e2	6.80172239848425e-15\\
1e3	1.38493390486599e-14\\
1e4	1.05818941984963e-14\\
1e5	2.25845444173964e-13\\
1e6	3.56791552708862e-14\\
1e7	4.80611714774733e-14\\
1e8	1.15441532320485e-14\\
1e9	3.33823688166197e-14\\
1e10	5.72537008627348e-14\\
1e11	2.78978846796502e-14\\
1e12	1.94474080949949e-13\\
1e13	8.46118032331273e-15\\
1e14	2.60985791839476e-14\\
1e15	2.79614751806634e-14\\
1e16	1.18187122400579e-13\\
};
\addplot [color=gray, only marks, mark=x, mark options={solid, gray}, forget plot]
  table[row sep=crcr]{%
1e1	1.10604488074839e-14\\
1e2	4.18990780937951e-14\\
1e3	1.03935680571458e-14\\
1e4	2.84123828473531e-14\\
1e5	2.88750742152259e-13\\
1e6	1.36919965556715e-14\\
1e7	9.1497994403248e-15\\
1e8	1.51568415166569e-13\\
1e9	2.70829875150659e-14\\
1e10	3.08145490996877e-14\\
1e11	1.33298702835514e-13\\
1e12	3.13018223581848e-14\\
1e13	2.45845071079901e-13\\
1e14	1.37004855535808e-13\\
1e15	5.61120679732334e-14\\
1e16	1.87423889937411e-13\\
};
\addplot [color=gray, only marks, mark=x, mark options={solid, gray}, forget plot]
  table[row sep=crcr]{%
1e1	7.07848206650932e-15\\
1e2	2.11973364148683e-14\\
1e3	8.7838495357917e-15\\
1e4	1.86697085184883e-14\\
1e5	4.84610050555491e-14\\
1e6	1.55762990205371e-14\\
1e7	1.48521034488659e-14\\
1e8	4.22889911682458e-14\\
1e9	2.04300146241264e-14\\
1e10	5.31953783950132e-14\\
1e11	9.75646881390043e-13\\
1e12	1.14679075922655e-14\\
1e13	2.64502509324397e-14\\
1e14	3.31648294917687e-14\\
1e15	6.17155801202303e-13\\
1e16	1.12115076628726e-14\\
};
\addplot [color=gray, only marks, mark=x, mark options={solid, gray}, forget plot]
  table[row sep=crcr]{%
1e1	2.21351189130293e-14\\
1e2	9.82724704714219e-15\\
1e3	1.25138965871266e-14\\
1e4	4.32791366585283e-14\\
1e5	1.4031997690514e-14\\
1e6	5.18275617394574e-14\\
1e7	6.11574525074353e-14\\
1e8	9.5166694166993e-14\\
1e9	2.90810422603173e-14\\
1e10	7.89511767790396e-14\\
1e11	6.64978086196674e-14\\
1e12	3.0645125032325e-13\\
1e13	5.0469359787811e-12\\
1e14	1.86561991879149e-14\\
1e15	2.23439292519952e-14\\
1e16	3.68961744572954e-13\\
};
\addplot [color=gray, only marks, mark=x, mark options={solid, gray}, forget plot]
  table[row sep=crcr]{%
1e1	7.55659914206604e-15\\
1e2	6.47606612012045e-14\\
1e3	8.46858225709815e-15\\
1e4	3.21914648425408e-14\\
1e5	2.00240282205088e-13\\
1e6	9.86420955158792e-14\\
1e7	3.6985480322931e-14\\
1e8	1.01917432943981e-14\\
1e9	4.71203184318095e-13\\
1e10	3.6185943201089e-14\\
1e11	1.84649544228231e-13\\
1e12	1.88289411220844e-14\\
1e13	1.26791854530393e-13\\
1e14	4.90178687140408e-14\\
1e15	9.84708450538673e-12\\
1e16	2.00355255505011e-14\\
};
\addplot [color=gray, only marks, mark=x, mark options={solid, gray}, forget plot]
  table[row sep=crcr]{%
1e1	6.30920020962244e-15\\
1e2	1.31000199065952e-14\\
1e3	9.5983852946362e-15\\
1e4	1.914849629957e-14\\
1e5	3.36552504966507e-14\\
1e6	2.46049898429871e-14\\
1e7	1.30163987115487e-13\\
1e8	8.41366465123873e-15\\
1e9	3.855982495587e-14\\
1e10	3.85605857150385e-13\\
1e11	1.00648479291632e-13\\
1e12	1.48679246116066e-14\\
1e13	2.84212920708693e-14\\
1e14	1.22086305878091e-14\\
1e15	1.23488385609767e-14\\
1e16	1.19748404769869e-14\\
};
\addplot [color=gray, only marks, mark=x, mark options={solid, gray}, forget plot]
  table[row sep=crcr]{%
1e1	7.47768725848634e-15\\
1e2	1.07545870597461e-14\\
1e3	1.04472483716858e-14\\
1e4	2.81834581052569e-14\\
1e5	1.8638746850259e-14\\
1e6	3.82440673239018e-14\\
1e7	1.50221822307736e-13\\
1e8	2.02300095809268e-13\\
1e9	5.79924085805162e-13\\
1e10	7.85548841873427e-14\\
1e11	3.42970588681543e-13\\
1e12	2.56074853451901e-12\\
1e13	1.55909019192531e-13\\
1e14	3.72893747865862e-13\\
1e15	1.01064665664292e-12\\
1e16	7.03960453265637e-14\\
};
\addplot [color=gray, only marks, mark=x, mark options={solid, gray}, forget plot]
  table[row sep=crcr]{%
1e1	1.17333238925591e-14\\
1e2	5.75157358619307e-15\\
1e3	1.51281879575227e-14\\
1e4	1.81173453410822e-14\\
1e5	9.14311581180332e-15\\
1e6	8.09718145912691e-15\\
1e7	2.30318801264867e-14\\
1e8	8.06565927793249e-14\\
1e9	7.12761729949279e-14\\
1e10	1.52699465622753e-13\\
1e11	7.77751289050468e-14\\
1e12	1.91571097800451e-14\\
1e13	5.71227578657165e-14\\
1e14	8.99230062880876e-15\\
1e15	1.30989617347924e-14\\
1e16	9.17751867828341e-14\\
};
\addplot [color=gray, only marks, mark=x, mark options={solid, gray}, forget plot]
  table[row sep=crcr]{%
1e1	1.80774158823566e-14\\
1e2	7.35059518103667e-15\\
1e3	1.43968061058572e-14\\
1e4	1.66711195993248e-14\\
1e5	8.06959215098305e-14\\
1e6	1.20096708700178e-14\\
1e7	4.26344621028538e-14\\
1e8	1.13702262834769e-14\\
1e9	8.57662379602391e-14\\
1e10	3.58894954886138e-13\\
1e11	7.31437007253826e-14\\
1e12	1.33927903316248e-12\\
1e13	4.8330748638976e-14\\
1e14	6.97466479217522e-15\\
1e15	6.77975403157791e-14\\
1e16	2.18415354173232e-14\\
};
\addplot [color=gray, only marks, mark=x, mark options={solid, gray}, forget plot]
  table[row sep=crcr]{%
1e1	3.48972123875449e-14\\
1e2	2.08216993136794e-14\\
1e3	1.17639821557462e-14\\
1e4	5.50550333223698e-14\\
1e5	2.6195036189948e-13\\
1e6	1.2413904715893e-13\\
1e7	1.11139532275379e-13\\
1e8	7.77900641143607e-13\\
1e9	1.44456988458719e-13\\
1e10	1.68320503057461e-13\\
1e11	5.75879092588718e-15\\
1e12	1.13145480901467e-14\\
1e13	3.57376224875748e-12\\
1e14	8.09735731632819e-15\\
1e15	1.00383233117317e-13\\
1e16	1.29449384023258e-14\\
};
\addplot [color=red, only marks, mark=x, mark options={solid, red}, forget plot]
  table[row sep=crcr]{%
1e1	3.48972123875449e-14\\
1e2	6.47606612012045e-14\\
1e3	1.04472483716858e-14\\
1e4	5.50550333223698e-14\\
1e5	8.06959215098305e-14\\
1e6	1.2413904715893e-13\\
1e7	1.30163987115487e-13\\
1e8	2.02300095809268e-13\\
1e9	5.79924085805162e-13\\
1e10	3.58894954886138e-13\\
1e11	3.42970588681543e-13\\
1e12	3.0645125032325e-13\\
1e13	3.57376224875748e-12\\
1e14	1.37004855535808e-13\\
1e15	9.84708450538673e-12\\
1e16	3.68961744572954e-13\\
};
\end{axis}
\end{tikzpicture}%
            \caption[]%
            {{ Newton}}    
            \label{fig:NewtRand}
        \end{subfigure}
        \caption{\emph{Example 1}: Residuals after one step of spectral divide-and-conquer for 10 runs with randomly generated matrices of certain condition numbers.} 
        \label{Fig:Rand}
    \end{figure}
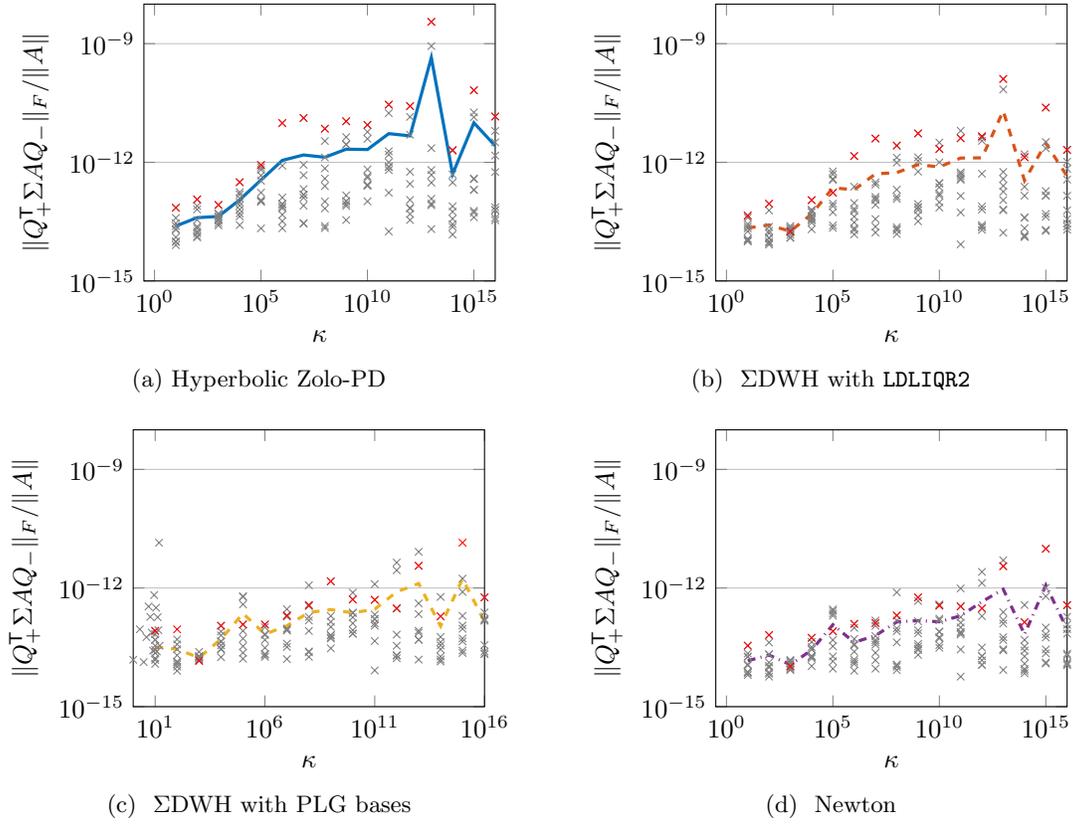              
\medskip
The second example provides first insights on the performance which can be expected by using different methods.
\paragraph{Example 2}
A random matrix of size $5\,000\times 5\,000$ is generated as in Example 1. We measure the number of iterations and the runtime using different methods to compute the matrix sign function. We measure the runtime of the sequential implementation of Zolo-PD, as well as the runtime resulting from its critical path. This means that we only take the runtime of one of the $r$ independent steps in each iteration, i.e.\ the first lines in iterations \eqref{Eq:LDLZolo} and \eqref{Eq:HRZolo}, into account. The measured runtime reflects a performance which can be achieved when these independent computations are implemented in parallel. We compare it to runtimes achieved by $\Sigma DWH$ based on $LDLIQR2$ and $\Sigma DWH$ based on $LDL^{\tran}$ factorizations \cite{BenP22}, and the Newton iteration \cite{ByeX08}. The computation of PLG bases is not yet suited for large-scale performance-critical algorithms, which is why it is not included in the comparison.  The results are found in Table \ref{Tab:ItRuntime}.
 
\begin{table}[h!]
\centering
\begin{tabular}{>{\raggedright}m{0.15\textwidth}>{\raggedright}m{0.225\textwidth}>{\raggedright}m{0.15\textwidth}>{\raggedright}m{0.15\textwidth}m{0.15\textwidth}}
\toprule
&$\kappa$&$10^2$&$10^{8}$&$10^{12}$\\
\midrule
\multirow{4}{*}{\# iterations}
&Hyperbolic Zolo-PD                &2&2&2  \\
&$\Sigma$DWH with \texttt{LDLIQR2} &5&6&6  \\
&$\Sigma$DWH with $LDL^{\tran}$    &5&6&x  \\
&Newton	                           &7&9&9  \\ \midrule
\multirow{4}{*}{runtime }
&Hyperbolic Zolo-PD  (critical path)              &941.70 (298.66)&1136.91 (255.86)&1240.86 (257.70)  \\      
&$\Sigma$DWH with \texttt{LDLIQR2} &883.79&988.39&1067.43   \\
&$\Sigma$DWH with $LDL^{\tran}$    &281.95&304.27&x   \\
&Newton	   		                     &355.05&379.38&416.19   \\   \midrule
                                     
\multirow{4}{\textwidth}{backward error\newline $\frac{\|Q_+^{\tran}\Sigma A Q_-\|_{F}}{\|A\|_{F}}$}
&Hyperbolic Zolo-PD                &7.42e-14&1.05e-11&1.78e-13\\      
&$\Sigma$DWH with \texttt{LDLIQR2} &7.88e-14&2.81e-12&3.29e-13\\
&$\Sigma$DWH with $LDL^{\tran}$    &8.50e-14&1.38e-11&x\\
&Newton	   	                       &1.70e-13&6.66e-13&1.01e-13\\ \midrule
\bottomrule
\end{tabular}
       \caption[l]{\emph{Example 2:} Number of iterations, runtimes and  error for different methods of spectral division for a matrix of size $5\,000\times 5\,000$. $\Sigma DWH$ with $LDL^{\tran}$ did not converge for matrices with $\kappa=10^{12}$.
       \label{Tab:ItRuntime}}
\end{table}
The methods converge as expected and all except $\Sigma$DWH with $LDL^{\tran}$  show good accuracy. $\Sigma$DWH with $LDL^{\tran}$ is known to be unstable for badly conditioned matrices \cite{BenNP22}. However, if it converges, it is the fastest among the measured methods.  The computational effort of one $\Sigma$DWH iteration based on $LDL^{\tran}$ is comparable to the effort of one Newton iteration that is also based on an $LDL^{\tran}$ factorization. $\Sigma$DWH converges in up to 6 steps, and Newton uses up to 9 steps. If \texttt{LDLIQR2} is employed instead of $LDL^{\tran}$ in $\Sigma$DWH, the computational effort doubles, as a second $LDL^{\tran}$ decomposition is used for ``reorthogonalization''. This makes it slower than the Newton iteration. If the critical path of the hyperbolic Zolo-PD is followed, an even lower runtime can be achieved. It could be accelerated at the cost of stability, when $LDL^{\tran}$ decompositions are used instead of \texttt{LDLIQR2}.

\subsection{Applications in electronic structure computations}\label{Subsec:Appl}
We now apply the developed method to two motivating examples concerning electronic excitations in solids and molecules. 
\paragraph{Example 3}
The \texttt{exciting} package \cite{GulKMetal14,VorACetal19} implements various ab initio methods for computing  excited states of solids or molecules, based on (linearized) augmented planewave + local orbital ((L)APW+lo) methods. It can be used to compute the optical scattering spectrum of Lithium Fluoride based on the Bethe-Salpeter equation. The main computational effort in this example is to compute eigenvalues and eigenvectors of a matrix of the form
\begin{align}
 H_{LF} = \begin{bmatrix}
 A_{LF} & B_{LF} \\
 -B_{LF} & -A_{LF}
\end{bmatrix}\in\mathbb{C}^{2560\times 2560},\ A_{LF}=A_{LF}^{\herm},\  B_{LF}=B_{LF}^{\herm}.
\end{align}

$H_{LF}$ is obviously pseudo-Hermitian with respect to $\Sigma=\diag{I_n,-I_n}$. Due to the additional structure, the eigenvalues are known to come in pairs of $\pm \lambda$ \cite{BenP22}. One step of spectral division results in a positive definite matrix, from which all eigenvalues and eigenvectors can be reconstructed. We extracted the matrix from the FORTRAN-based \texttt{exciting} code as a test example for our MATLAB-based prototype.
\begin{table}[h!]
\centering
\begin{tabular}{>{\raggedright}m{0.175\textwidth}>{\raggedright}m{0.15\textwidth}>{\raggedright}m{0.15\textwidth}>{\raggedright}m{0.15\textwidth}m{0.15\textwidth}}
\toprule
&Hyperbolic\newline Zolo-PD&$\Sigma$DWH with \texttt{LDLIQR2}& $\Sigma$DWH with $LDL^{\tran}$& Newton\\
\midrule
\# iterations &2&5&5&7\\
Zolotarev rank &4&1 & 1 & not applicable\\
backward error (Chol, Alg. \ref{Alg:HypSubspaceChol})  &1.02e-10&7.42e-11&9.62e-11&2.48e-10\\ 
backward error (LDL, Alg. \ref{Alg:HypSubspaceDefLDL}) &6.93e-18&7.26e-18&6.99e-18&1.48e-17\\ 
\bottomrule
\end{tabular}
       \caption[l]{\emph{Example 3:} Results for Bethe-Salpeter matrix computed for Lithium Fluoride.
       \label{Tab:ItResLF}}
\end{table}

The results in Table \ref{Tab:ItResLF} show that convergence is achieved in a limited number of iterations for all methods, as expected. The reported backward error $\frac{\|Q_+^{\tran}\Sigma A Q_-\|_{F}}{\|A\|_{F}}$ depends largely on the chosen method for computing a hyperbolic subspace representation. The Cholesky-based method does not work well. The eigenvalues smallest in modulus easily ``pass over'', such that the computed quantities $\Sigma P_+$ or $-\Sigma P_-$ have negative eigenvalues. The Cholesky-based method in Algorithm \ref{Alg:HypSubspaceChol} does not accurately capture this behavior, while the $LDL^{\tran}$-based method alleviates the effect through pivoting. 

All methods for computing the matrix sign function work equally well concerning accuracy because $H_{LF}$ is well conditioned ($\cond{H_{LF}}\approx 10$).

\paragraph{Example 4}
In \cite{BenKK16,BenDKetal17} a Bethe-Salpeter approach is explored in the context of tensor-structured Hartree-Fock theory for molecules \cite{RebTS13}. We consider the $N_2H_4$ example in \cite{BenKK16}. With real-valued orbitals the derivation arrives at a structured eigenvalue problem similar to Example 3, but with real values. 
\begin{align}
 H_{N_2H_4} = \begin{bmatrix}
 A_{N_2H_4} & B_{N_2H_4} \\
 -B_{N_2H_4}^{\tran} & -A_{N_2H_4}^{\tran}
\end{bmatrix}\in\mathbb{R}^{1314\times 1314},\ A_{N_2H_4}=A_{N_2H_4}^{\tran},\  B_{N_2H_4}\approx B_{N_2H_4}^{\tran}.
\end{align}
While the original derivation in \cite{RebTS13} yields a symmetric off-diagonal block $B$, in the construction in \cite{BenKK16}, this property is lost. The property of pseudosymmetry, however, is not affected, making our developed method applicable.

\begin{table}[h!]
\centering
\begin{tabular}{>{\raggedright}m{0.175\textwidth}>{\raggedright}m{0.15\textwidth}>{\raggedright}m{0.15\textwidth}>{\raggedright}m{0.15\textwidth}m{0.15\textwidth}}
\toprule
&Hyperbolic\newline Zolo-PD&$\Sigma$DWH with \texttt{LDLIQR2}& $\Sigma$DWH with $LDL^{\tran}$& Newton\\
\midrule
\# iterations &2&5&5&7\\
Zolotarev rank &5&1 & 1 & not applicable\\
backward error (Chol, Alg. \ref{Alg:HypSubspaceChol})  &1.19e-18&9.23e-19&1.46e-17&2.04e-18\\ 
backward error (LDL, Alg. \ref{Alg:HypSubspaceDefLDL}) &1.22e-18&9.62e-19&1.46e-17&2.13e-18\\ 
\bottomrule
\end{tabular}
       \caption[l]{\emph{Example 4:} Results for Bethe-Salpeter matrix computed for $N_2H_4$.
       \label{Tab:ItResN2H4}}
\end{table}

Numerical results of the spectral division are found in Table \ref{Tab:ItResN2H4}. All methods yield good results ($\cond{H_{N_2H_4}}\approx 5$). In contrast to Example 3, no problem occurs when the Cholesky decomposition is used for computing hyperbolic subspace representations. An explanation is probably linked to the fact that real matrices instead of complex ones are considered but requires further investigation.

\begin{figure}[ht]
\centering
\setlength\figureheight{0.5\textwidth}
\setlength\figurewidth{0.7\textwidth}
%
%
\definecolor{mycolor1}{rgb}{0.00000,0.44700,0.74100}%
\definecolor{mycolor2}{rgb}{0.85000,0.32500,0.09800}%
\begin{tikzpicture}

\begin{axis}[%
width=0.951\figurewidth,
height=\figureheight,
at={(0\figurewidth,0\figureheight)},
scale only axis,
xmin=0,
xmax=30,
ymin=6,
ymax=16,
grid=both,
axis background/.style={fill=white},
xlabel = eigenvalues,
ylabel = energy (eV),
legend style={cells={align=center}, font = \small, draw=gray},
legend pos=south east
]

\addplot [color=mycolor1, mark=square, mark options={solid, mycolor1}, very thick]
  table[row sep=crcr]{%
1	7.07309980765472\\
2	7.08653473796964\\
3	7.9276276455983\\
4	7.95420169878752\\
5	8.72777308687677\\
6	8.74217818613574\\
7	9.53198176222625\\
8	9.54118062960048\\
9	9.79913016276075\\
10	9.81343896346495\\
11	12.2577986324441\\
12	12.2913207163301\\
13	12.9193556142916\\
14	12.9474492307876\\
15	12.9564265594342\\
16	12.961949533496\\
17	13.0730618159105\\
18	13.0946213498661\\
19	13.6461891096159\\
20	13.678293729102\\
21	13.7195156113558\\
22	13.7458924832696\\
23	14.419833128136\\
24	14.4246315547007\\
25	14.77101172893\\
26	14.7958418545424\\
27	15.1161790523069\\
28	15.1199732156908\\
29	15.2514552784035\\
30	15.2557554267822\\
};
\addlegendentry{Original matrix $H_{N_2H_4}\in\mathbb{R}^{2n\times 2n}$}

\addplot [color=mycolor2, only marks, mark=x, mark options={solid, mycolor2, very thick, scale =2}]
  table[row sep=crcr]{%
1	7.0730998076542\\
3	7.92762764559616\\
5	8.72777308687579\\
7	9.53198176222612\\
9	9.79913016276224\\
11	12.257798632444\\
13	12.9193556142909\\
15	12.956426559433\\
17	13.0730618159098\\
19	13.6461891096148\\
21	13.7195156113553\\
23	14.4198331281358\\
25	14.7710117289294\\
27	15.1199732156912\\
29	15.2514552784031\\
};
\addlegendentry{Positve definite matrix $H_{N_2H_4,+}\in\mathbb{R}^{n\times n}$\\ after spectral division}

\end{axis}
\end{tikzpicture}%
 \caption{\emph{Example 4}: Absolute values of eigenvalues corresponding to $N_2H_4$. }\label{Fig:KhoromskiEVs}
\end{figure}
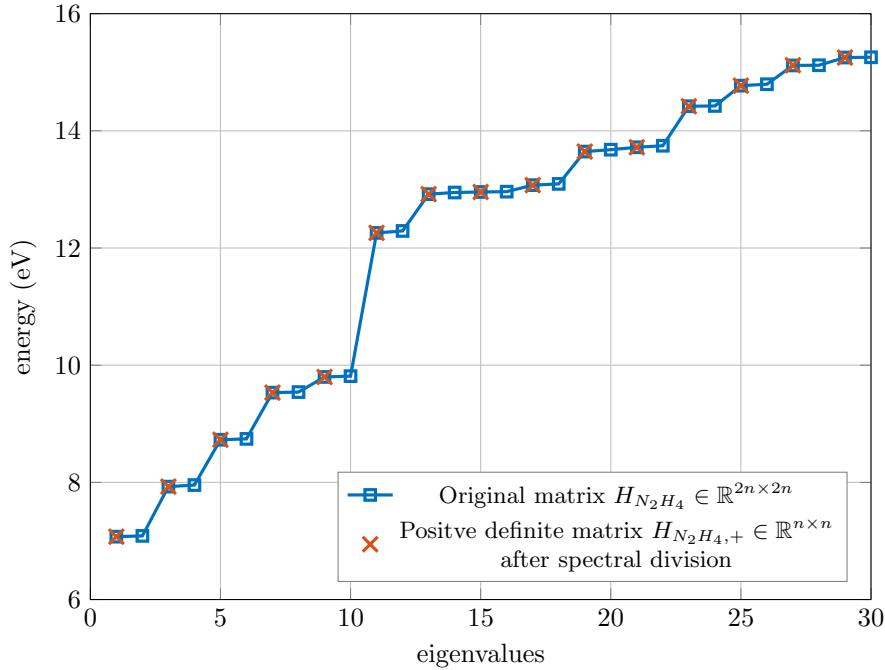

Figure \ref{Fig:KhoromskiEVs} corresponds to Figure 2 in \cite{BenKK16} and displays absolute values of the eigenvalues of $H_{N_2H_4}$. The red crosses denote the eigenvalues of the positive definite matrix resulting after one step of spectral division ($A_{11}$ in Algorithm \ref{Alg:PseuDivConq}). The remaining eigenvalues have (approximately) equal modulus, but opposite sign and are found as the eigenvalues of the negative definite matrix ($A_{22}$ in Algorithm \ref{Alg:PseuDivConq}).

\section{Conclusions}\label{Sec:Conclusions}
We presented a generalization of the well-known spectral divide-and-conquer approach for the computation of eigenvalues and eigenvectors of pseudosymmetric matrices. In particular, when matrices with additional definiteness properties are considered, many parallels to the symmetric divide-and-conquer method become apparent. These parallels allow a computation of the matrix sign function, the key element for spectral division approaches, in just two iterations, using Zolotarev functions. Furthermore, the eigenvalue problem is decoupled into two smaller symmetric eigenvalue problems that can be solved with existing techniques. The presented algorithm is a promising new approach in the field of computing electronic excitations. 

As we presented a completely new approach for structured eigenvalue computations, naturally, many possible future research directions open up as a consequence of this work. It is possible to use permuted Lagrangian graph bases, as presented in \cite{BenP22}, to further improve the accuracy of the Zolotarev iteration for computing the matrix sign function. This should go hand in hand with a well-founded analysis of the stability of the proposed methods. In the same vein, the numerical behavior of the subspace computations (Algorithms \ref{Alg:HypSubspaceChol} and \ref{Alg:HypSubspaceDefLDL}) is not yet fully understood, as the examples presented in Section \ref{Subsec:Appl} show. Regarding the applications concerning electron excitation, the matrices (\eqref{Eq:BSE1} to \eqref{Eq:BSE2}) show even more structure than has been exploited in the presented methods. Making the proposed iterations aware of these structures, such that they operate directly on the matrix blocks $A$ and $B$, is a promising direction towards even more efficient methods.




\addcontentsline{toc}{section}{References}

\end{document}